\newtheorem{theorem}{Theorem}
\numberwithin{theorem}{subsection} 
\newtheorem*{thma}{Theorem A}
\newtheorem*{thmb}{Theorem B}
\newtheorem*{thmc}{Theorem C}
\newtheorem*{thmd}{Theorem D}
\newtheorem*{thme}{Theorem E}
\theoremstyle{plain}
\newtheorem{corollary}{Corollary}
\newtheorem{definition}{Definition}
\newtheorem{remark}{Remark}
\numberwithin{equation}{section}
\begin{document}
\title[ Bohr's inequality for stable mappings ]{On the Bohr's inequality for stable mapping}
\author{ Zayid AbdulHadi}
\address{Department of Mathematics and statistics\\
American University of Sharjah\\
Sharjah, Box 26666\\
UAE}
\email{zahadi@aus.edu}
\author{ Layan El Hajj}
\address{Department of Mathematics, American University of  Dubai, Dubai,\\
UAE. }
\email{ lhajj@aud.edu }
\date{June 3, 2021}
\subjclass[2000]{Primary 30C35, 30C45; Secondary 35Q30.}
\keywords{harmonic mappings; logharmonic mappings;Bohr's radius;stable
properties;stable univalent; stable convex }

\begin{abstract}

We consider the class of \emph{stable} harmonic mappings $f=h+\overline{g}$  introduced by Martin, Hernandez, and the class of \emph{stable} logharmonic mappings $f=zh\overline{g}$ introduced by AbdulHadi, El-Hajj. We determine Bohr's radius for the classes of stable univalent harmonic mappings, stable convex harmonic mappings and stable univalent logharmonic mappings. We also consider improved and refined versions of Bohr's inequality and discuss the Bohr's Rogonsiski radius for these family of mappings.
\end{abstract}

\maketitle

\section{Introduction}

A planar harmonic mapping in the unit disk $U$ is a complex-valued harmonic function $f$ which maps $U$ onto some planar domain $f(U)$ and satisfies $\Delta f = \partial_{z\overline{z}} f= 0 $. Since $U$ is simply connected, the mapping $f$ has a canonical decomposition $f=h+\overline{g}$, where $h$ and $g$ are analytic in $U$.  

A logharmonic mapping defined on the unit disk $U=\{z:|z|<1\}$ is a solution
of the nonlinear elliptic partial differential equation 
\begin{equation}
\frac{\overline{f_{\overline{z}}}}{\overline{f}}=a\frac{f_{z}}{f},
\label{eq1.1}
\end{equation}%
where where $a$ is an analytic function satisfying $|a(z)|<1$ in $U.$

%It has been shown that if $f$ is a non-vanishing logharmonic mapping in $U$,
%then $f$ can be expressed as 
%\begin{equation*}
%f(z)=h(z)\overline{g(z)},
%\end{equation*}%
%where $h$ and $g$ are nonvanishing analytic functions in $U $, (see \cite{AB2}).

If $f$ is a non-constant logharmonic mapping of $U$ and vanishes only at $%
z=0 $, then $f$ admits the representation 
\begin{equation}
f(z)=z^{m}|z|^{2\beta m}h(z)\overline{g(z)},  \label{eq1.2}
\end{equation}%
where $m$ is a non-negative integer, $\mathrm{Re}(\beta )>-1/2$, and $h$ and $%
g$ are analytic functions in $U$ satisfying $g(0)=1$ and $h(0)\neq 0$ (see
\cite{AB2}). 
Note that $f(0)\neq 0$ if and only if $m=0$, and that a univalent
logharmonic mapping on $U$ vanishes at the origin if and only if $m=1$, that
is, $f$ has the form 
\begin{equation*}
f(z)=z|z|^{2\beta }h(z)\overline{g(z)},
\end{equation*}%
where $\mathrm{Re}(\beta )>-1/2$ and $0\notin (hg)(U).$ This class has been
studied extensively (for details see \cite{AA1}-\cite{AH2}). In \cite{HM}, the authors introduced the  families of stable harmonic mappings. 
\begin{definition}
A (sense preserving) harmonic mapping $f=h+ \overline{g}$ is stable harmonic univalent or SHU in the unit disk (resp.stable harmonic convex (SHC) if all the mappings $f_{\lambda} =h+ \lambda g$ with $|\lambda|= 1$ are univalent (resp.convex) in $U$.
\end{definition}

Similarly, in \cite{AE}, the authors introduced the notion of stable logharmonic mappings and proved some interesting properties for this class
of functions. We first recall the definition of stable univalent logharmonic mappings, and the properties that will be useful for our purposes.

\begin{definition}
A logharmonic mapping $f=zh\overline{g}$  such that $f (0) = 0$ and $h(0) = g(0) = 1,$ is said to be stable univalent
logharmonic or SST$_{Lh}$ in the unit disk, if the mappings $\ f_{\lambda
}=zh(z)\overline{g(z)}^{\lambda }$ are univalent for all $|\lambda |\leq 1.$
\end{definition}

In this paper we will consider Bohr's Radius for both of these classes of functions. Bohr's inequality says that if $f(z)=\underset{n=0}{\overset{\infty}{{\displaystyle\sum}}}a_{n}z^{n}\ $ is analytic in the unit disc $U$ and $|f(z)|<1$ for all $z$ in $U,$ then $\underset{n=0}{\overset{\infty}{{\displaystyle\sum}}}\left\vert a_{n}z^{n}\right\vert \leq1\ $for all $z\in$ $U$ with $|z|\leq\frac{1}{3}.$ This inequality was discovered by Bohr in 1914 (see \cite{Bo}).
Bohr actually obtained the inequality for $|z|\leq\frac{1}{6}.$ Later Wiener, Riesz and Schur independently, established the inequality for $|z|\leq \frac{1}{3}$ and showed that $\frac{1}{3}$ is sharp (see \cite{PPS, S2, T}).  An observation shows that the quantity $1-|a_0|$ is equal to $d(f(0), \partial f(U))$. Therefore, Bohr's inequality, can be written in the following form
$$\underset{n=1}{\overset{\infty}{{\displaystyle\sum}}}\left\vert a_{n}z^{n}\right\vert \leq 1- |a_0| = d(f(0), \partial f(U)),$$
for $|z| = r \leq 1/3$, where $d$ is the Euclidean distance. It is important to note that the
constant $1/3$ is independent of the coefficients of the Taylor series of $f(z)$. In recent years, a number of researchers revisited the work of Bohr improving and extending this work to more general settings.
Various generalizations of the classical Bohr inequality have been investigated in different branches of mathematics,
% like hardy spaces (\cite{BDK,BR} , block spaces \cite{LP} , holomorphic functions of several variables\cite{LP2}, and a particular interest in several articles was for harmonic mappings (cite{KP,KP1,KP2,KP3,})
 See \cite{AA,AAH,BDK,IKKP}, and  the paper \cite{A3} carried Bohr's theorem to prominence for the case of several complex variables.
 %Bohr’s phenomenon in a more general setting has created enormous interest on Bohr’s inequality in variety of situations including function spaces point of view. 
We refer the reader to a survey on this topic by Abu-Muhanna et al. \cite{AAP} and the references therein, which increased the interest in the topic and was followed by a series of papers by several authors generalizing this and several improved and refined versions of Bohr's inequality have been discussed, see for instance \cite{AB, DR,KP1,KP2,KP3,KP4,KPS,LP2,EPR}.

%For instance, Hardy spaces \cite{BDK,DR}, Bloch spaces [34, 38], harmonic mappings [1, 2, 9, 24, 30, 34, 40, 42], Dirichlet series
%[10], logarithmic power series [13], functions in Banach space [14], and holomorphic functions of several variables [3, 4, 5, 11, 15, 19, 20, 22
%There has been a considerable amount of interest on this topic especially after the appearance of the recent articles of Kayumov and Ponnusamy \cite{KP1, KP2} (see also \cite{AKP, Bo2} and the references therein), where they solved the problem of Bohr radius of odd analytic functions proposed by Ali et al. \cite{AB}. See also \cite{KP3}, where the authors solved the conjecture of Djakov and Ramanujan \cite{DR} about powered Bohr radius. In \cite{KPS}, the Bohr radius for sense-preserving
%harmonic mappings are discussed. These articles renewed interest on this topic as can be seen from the number of other articles of various authors in the literature.

Besides the Bohr radius, there is also the notion of Rogosinski radius (see \cite{LG,SS}) which is described as follows: If $f(z)=\underset{n=0}{\overset{\infty}{{\displaystyle\sum}}}a_{n}z^{n}\ $ is an analytic function on $U$ such that $|f(z)| < 1$ in U, then for every $N \geq 1$, we have $|S_N (z)| < 1$ in the disk $ |z| < 1/2$ and this radius is sharp, where $S_N (z) =\underset{n=0}{\overset{N}{{\displaystyle\sum}}}a_{n}z^{n}$ denotes the partial sums of $f$. In \cite{KKP,KP4}, the authors define the Bohr-Rogosinski sum $R_{N}^{f}(z)$ by
$$R_{N}^{f} (z) = |f(z)| + \underset{n=N}{\overset{\infty}{{\displaystyle\sum}}}\left\vert a_{n}z^{n}\right\vert,\,\,\,\,|z|=r.$$
It is worth noting that for $N = 1$, this quantity is related to the classical Bohr sum in which
$f(0)$ is replaced by $f(z)$. The Bohr-Rogosinski's radius is defined to be the largest number $r_0>0$ such that $R_N^{f} (z) \leq 1, $ for $|z|\leq r_0.$ Moreover we have $|S_N(z)|\leq R_N^f(z)$, hence Bohr-Rogosinski's sum is related to Rogosinski's
characteristic and the validity of Bohr-type radius for $R_N^f(z)$ gives Rogosinski radius in the case
of bounded analytic functions. Another version of Bohr's theorem exists with the initial coefficient $|a_0|^2$  instead of $|a_0|$, and it is well-known that the Bohr radius with this change of initial coefficient becomes $1/2$ instead of
$1/3$. We will also look at the analogue in the setting of Bohr-Rogosinski, i.e., with $|f(z)|^2$ in place of $|f(z)|$.

This paper is organized as follows: in section 2 we consider the Bohr type inequalities for the  the class of stable harmonic mappings and in section 3 we will consider the class of stable logharmonic mappings. In each of these sections, we will show for our classes of stable mapping a version of the Bohr inequality along with its improved and refined versions as in \cite{EPR,KP4,PVW}, in addition, the Bohr Rogosinski radius as in \cite{KKP,KP2} will be discussed.

\section{Bohr inequalities for stable harmonic mappings}

There has been a lot of interest recently in Bohr's radius for harmonic mappings. We refer to the survey paper \cite{KKP} on a nice compilation and exposition of these results. Namely, results have been obtained for locally univalent harmonic mappings and $k$-quasiconformal harmonic mappings.

The class $S_{H}^{0}$ is defined  as the family of sense preserving univalent harmonic mappings $f=h+\overline{g}$ in the unit disk with the normalizations $h(0) =g(0) = 1-h^{'}(0) =g^{'}(0) = 0.$  
We  let  
\begin{equation*}
h(z)= z + \sum\limits_{n=2}^{\infty }a_{n}z^{n}, and \,\,\,
g(z)= \sum\limits_{n=2}^{\infty }b_{n}z^{n} .
\end{equation*} 

We define the majorant series for  $f=h+\overline{g}$ as in \cite{AAH}  to be 
\begin{equation*}
M_{f}(r)=  \sum\limits_{n=1}^{\infty}\left(|a_{n}+|b_{n}|\right) r^{n}= r + \sum\limits_{n=2}^{\infty}\left(|a_{n}+|b_{n}|\right) r^{n}
\end{equation*}

\subsection{Bohr's radius for stable harmonic mappings}

We consider the Bohr's radius for the class of stable univalent harmonic mappings and stable convex harmonic mappings in $S_H^0$, which were introduced in \cite{HM}.  We will need the following coefficients and distortion theorems for these class of functions :
 
\begin{thma}({\cite{HM}})\label{coefficient}
\begin{itemize}
 \item[(i)]Assume that  $f=h+\overline{g}\ $ in $S_H^0 $ is stable univalent harmonic mapping. Then for all non-negative integers $n$, we have 
\begin{equation}\label{coeffunivalent}
\left\vert |a_{n}|-|b_{n}|\right\vert \leq \max \{|a_{n}|,|b_{n}|\}\leq|a_{n}|+|b_{n}|\leq n.
\end{equation}

\item[(ii)]Assume that  $f=h+\overline{g}\ $ in $S_H^0 $ is stable convex harmonic mapping. Then for all nonnegative integers $n$, we have 
\begin{equation}\label{coeffconvex}
\left\vert |a_{n}|-|b_{n}|\right\vert \leq \max \{|a_{n}|,|b_{n}|\}\leq|a_{n}|+|b_{n}|\leq 1.
\end{equation}
\end{itemize}
\end{thma}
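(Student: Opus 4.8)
The first two inequalities in \eqref{coeffunivalent} and \eqref{coeffconvex}, namely $\left||a_n|-|b_n|\right| \le \max\{|a_n|,|b_n|\} \le |a_n|+|b_n|$, hold for an arbitrary pair of nonnegative real numbers and carry no geometric information. So the plan is to focus entirely on the rightmost bounds: $|a_n|+|b_n|\le n$ in the univalent case and $|a_n|+|b_n|\le 1$ in the convex case.

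The strategy is to use stability to reduce to classical coefficient estimates for analytic functions. Since $h(z)=z+\sum_{n\ge 2}a_n z^n$ and $g(z)=\sum_{n\ge 2}b_n z^n$, for each $\lambda$ with $|\lambda|=1$ the function
\[
f_\lambda(z)=h(z)+\lambda g(z)=z+\sum_{n=2}^{\infty}(a_n+\lambda b_n)z^n
\]
is analytic on $U$ and normalized by $f_\lambda(0)=0$ and $f_\lambda'(0)=1$. By the definition of $SHU$ (respectively $SHC$), every such $f_\lambda$ is univalent (respectively convex) in $U$, hence belongs to the class $S$ (respectively to the class of normalized convex functions). I would then invoke the classical sharp coefficient theorems: de Branges' theorem yields $|a_n+\lambda b_n|\le n$ for the univalent functions $f_\lambda$, while the classical bound for normalized convex functions yields $|a_n+\lambda b_n|\le 1$; and each of these holds uniformly over all $|\lambda|=1$.

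The decisive final step is to optimize over the unimodular parameter. For a fixed $n$ I would choose $\lambda$ with $|\lambda|=1$ so that $\lambda b_n$ points in the same direction as $a_n$, i.e. $\arg(\lambda b_n)=\arg(a_n)$ (any admissible $\lambda$ works when $a_n b_n=0$); for this choice $|a_n+\lambda b_n|=|a_n|+|b_n|$, and the estimate of the previous step immediately gives $|a_n|+|b_n|\le n$ (respectively $\le 1$). I expect the genuinely substantial ingredient to be the classical coefficient estimate itself — de Branges' resolution of the Bieberbach conjecture in the univalent case — whereas the reduction from stability to a one-parameter family of analytic functions, together with the alignment choice of $\lambda$, is elementary; the only bookkeeping needed is for the degenerate cases $n=0,1$, where the inequalities are immediate.
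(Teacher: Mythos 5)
The paper itself states Theorem A without proof, citing Hernandez and Mart\'in \cite{HM}, so there is no in-paper argument to compare against; your proof is correct and is essentially the standard one from that reference. Stability gives the one-parameter family of normalized analytic functions $f_{\lambda}=h+\lambda g\in S$ (resp.\ in the normalized convex class), de Branges (resp.\ Loewner's bound $|c_n|\le 1$ for convex functions) bounds $|a_n+\lambda b_n|$, and aligning the argument of $\lambda b_n$ with that of $a_n$ yields $|a_n|+|b_n|\le n$ (resp.\ $\le 1$). The one subtlety worth noting is that in \cite{HM} stability is defined via the harmonic shears $h+\overline{\lambda g}$, and the passage to the analytic family $h+\lambda g$ is itself the main equivalence theorem of that paper; with the definition as transcribed here ($f_{\lambda}=h+\lambda g$, analytic), your reduction applies verbatim.
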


 \begin{thmb}({\cite{HM}})\label{distortion}
\begin{itemize}
\item[(i)] Let $f=h+\overline{g}$  in $S_H^0 $ be a stable univalent harmonic mapping on the unit disk $U$, then for all $z \in U,$
 we have:%
\begin{equation*}
\dfrac{|z|}{(1+|z|)^{2}}\leq |f(z)|\leq \dfrac{|z|}{(1-|z|)^{2}}.
\end{equation*}

\item[(ii)]Let $f=h+\overline{g}$ in $S_H^0 $ be a stable convex harmonic mapping on the unit disk $U$,
then for all z$\ \in U,$ we have:%
\begin{equation*}
\dfrac{|z|}{1+|z|}\leq |f(z)|\leq \dfrac{|z|}{1-|z|}.
\end{equation*}
\end{itemize}
\end{thmb}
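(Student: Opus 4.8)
The plan is to reduce both parts to the classical growth theorems for univalent and convex analytic functions by exploiting the stability hypothesis. The key observation is that for $f=h+\overline{g}\in S_H^0$ the analytic functions $f_{\lambda}=h+\lambda g$ satisfy $f_{\lambda}(0)=0$ and $f_{\lambda}'(0)=h'(0)+\lambda g'(0)=1$, so whenever $f$ is stable univalent harmonic, each $f_{\lambda}$ with $|\lambda|=1$ is a normalized univalent analytic function, i.e.\ $f_{\lambda}\in S$; likewise, when $f$ is stable convex, each $f_{\lambda}$ is a normalized convex univalent function in $K$. This is exactly the content of the definitions of SHU and SHC, so no extra work is needed to place each $f_{\lambda}$ in the appropriate classical class.

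Next I would bridge $|f(z)|$ to the family $\{|f_{\lambda}(z)|\}_{|\lambda|=1}$ via a phase-optimization identity. Writing $h(z)=|h(z)|e^{i\alpha}$ and $g(z)=|g(z)|e^{i\beta}$ and letting $\lambda$ range over the unit circle, one has
\begin{equation*}
\min_{|\lambda|=1}|h(z)+\lambda g(z)| = \big||h(z)|-|g(z)|\big|
\quad\text{and}\quad
\max_{|\lambda|=1}|h(z)+\lambda g(z)| = |h(z)|+|g(z)|,
\end{equation*}
the extrema being attained at $\lambda=\pm e^{i(\alpha-\beta)}$. On the other hand, the reverse and ordinary triangle inequalities give $\big||h(z)|-|g(z)|\big|\le |h(z)+\overline{g(z)}|=|f(z)|\le |h(z)|+|g(z)|$. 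Combining these two observations yields the sandwich $\min_{|\lambda|=1}|f_{\lambda}(z)|\le |f(z)|\le \max_{|\lambda|=1}|f_{\lambda}(z)|$, which is the heart of the argument.

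For part (i) I would then invoke the classical (Koebe) growth theorem: every $F\in S$ satisfies $\tfrac{|z|}{(1+|z|)^{2}}\le |F(z)|\le \tfrac{|z|}{(1-|z|)^{2}}$. Applying this to each $f_{\lambda}$ and then taking the minimum on the left and the maximum on the right preserves these uniform bounds, so $\tfrac{|z|}{(1+|z|)^{2}}\le |f(z)|\le \tfrac{|z|}{(1-|z|)^{2}}$, which is the assertion. Part (ii) is identical except that I would use the growth theorem for normalized convex univalent functions, $\tfrac{|z|}{1+|z|}\le |F(z)|\le \tfrac{|z|}{1-|z|}$ for $F\in K$, since the SHC hypothesis places each $f_{\lambda}$ in $K$. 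The proof is essentially mechanical once the reduction is in place; the only point requiring care is the direction of the inequalities in the phase-optimization step, where one must check that the $\lambda$ realizing each triangle inequality has unit modulus so that the growth theorem genuinely applies to it. Since the extremal $\lambda=\pm e^{i(\alpha-\beta)}$ lie on $|\lambda|=1$, the stability hypothesis (which quantifies over all such $\lambda$) is precisely strong enough, and there is no obstacle beyond this bookkeeping.
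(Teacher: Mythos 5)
Your proposal is correct, and it is essentially the standard argument: since the paper only states Theorem B as an imported result from \cite{HM} without proof, there is no internal proof to compare against, but your reduction --- sandwiching $|f(z)|=|h(z)+\overline{g(z)}|$ between $\min_{|\lambda|=1}|h(z)+\lambda g(z)|=\bigl||h(z)|-|g(z)|\bigr|$ and $\max_{|\lambda|=1}|h(z)+\lambda g(z)|=|h(z)|+|g(z)|$ and then applying the classical growth theorems for $S$ and $K$ to the normalized analytic functions $f_\lambda$ --- is precisely how this is established in \cite{HM}. The normalization check $f_\lambda(0)=0$, $f_\lambda'(0)=1$ (using $g'(0)=0$ for $S_H^0$) and the observation that the extremal $\lambda$ lies on the unit circle are exactly the points that need care, and you have handled both.
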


%\begin{corollary}
%\begin{itemize}
%\item[(i)]Let $f=h+\overline{g}$ be a stable univalent harmonic mapping on the unit disk U,$\ 0\notin hg(U).$ Then 
%\begin{equation*}
%\frac{1}{4}\leq d(0,\partial f(U))\leq 1
%\end{equation*}
%\item[(ii)]Let $f=h+\overline{g}$ be a stable univalent harmonic mapping on the unit disk U,$\ 0\notin hg(U).$ Then 
%\begin{equation*}
%\frac{1}{2}\leq d(0,\partial f(U))\leq 1
%\end{equation*}
%\end{itemize}
%\end{corollary}

%\begin{proof}
%The proof follows as in Corollary 1 in \cite{AAN}, by taking%
%\begin{equation*}
%d(0,\partial f(U))=\underset{|z|\rightarrow 1}{\lim }\inf \ |f(z)-f(0)|=%
%\underset{|z|\rightarrow 1}{\lim }\inf \ \dfrac{|f(z)-f(0)|}{|z|}\geq \frac{1%
%}{4}
%\end{equation*}%
%where the last inequality follows from Theorem \ref{stable1}.
%\end{proof}

We are now ready to prove the Bohr radius for stable harmonic mappings.

\begin{theorem}\label{HarmonicBohr}

\begin{enumerate}
\item[(i)] Let $f=h+\overline{g}$  in $S_H^0$  be a stable convex harmonic mapping on the unit disk U, then $$M_{f}(r)\leq d(f(0),\partial f(U))$$ if \ $|z|\leq r_{0}= 1/3,$
where $r_{0}$ is the unique root in $(0,1)$ of \ 
\begin{equation*}
 \frac{r}{1-r } =\frac{1}{2}.
\end{equation*}
\item[(ii)]Let $f=h+\overline{g}$ in $S_H^0$ be a stable univalent harmonic mapping (or stable starlike) on the unit disk $U$, then 
$$M_{f}(r)\leq 1 $$ for \ $|z|\leq r_{0},$
where $r_{0}\approx 0.382$ is the unique root in $(0,1)$ of
\begin{equation*}
  \frac{r}{\left( 1-r\right) ^{2}} =1,
\end{equation*}
and 
 $$M_{f}(r)\leq d(f(0),\partial f(U))$$ if $|z|\leq r_{0},$
where $r_{0}\approx 0.172$ is the unique root in $(0,1)$ of \ 
\begin{equation*}
  \frac{r}{\left( 1-r\right) ^{2}} =\frac{1}{4}.
\end{equation*}
In all of the above $r_0$ is the best possible radius.
\end{enumerate}
\end{theorem}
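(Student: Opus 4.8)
The plan is to reduce each assertion to a single-variable inequality in $r$: I would bound the majorant series $M_f(r)$ from above using the coefficient estimates of Theorem~A, bound the distance $d(f(0),\partial f(U))$ from below using the distortion estimates of Theorem~B, and then compare the two.

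First I would estimate the majorant. Since $f\in S_H^0$ has $a_1=1$ and $b_1=0$, and the stable convex estimate \eqref{coeffconvex} gives $|a_n|+|b_n|\le 1$ for $n\ge 2$, summing a geometric series yields
\begin{equation*}
M_f(r)=r+\sum_{n=2}^{\infty}\bigl(|a_n|+|b_n|\bigr)r^n\le \sum_{n=1}^{\infty}r^n=\frac{r}{1-r}.
\end{equation*}
In the stable univalent case \eqref{coeffunivalent} gives $|a_n|+|b_n|\le n$, so $M_f(r)\le\sum_{n=1}^{\infty}n\,r^n=\frac{r}{(1-r)^2}$.

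Next I would compute $d(f(0),\partial f(U))$. The normalization forces $f(0)=0$, so this distance equals $\inf_{w\in\partial f(U)}|w|$. Letting $|z|\to 1$ in the lower distortion bounds of Theorem~B, every boundary value satisfies $|w|\ge \tfrac12$ in the convex case and $|w|\ge\tfrac14$ in the univalent case; hence $d(f(0),\partial f(U))\ge\tfrac12$ and $\ge\tfrac14$ respectively. Combining with the majorant bounds, the inequality $M_f(r)\le d(f(0),\partial f(U))$ holds as soon as $\frac{r}{1-r}\le\frac12$ (convex) or $\frac{r}{(1-r)^2}\le\frac14$ (univalent), which give the stated roots $r_0=1/3$ and $r_0=3-2\sqrt2\approx0.172$; similarly $M_f(r)\le 1$ follows once $\frac{r}{(1-r)^2}\le 1$, i.e.\ $r\le\frac{3-\sqrt5}{2}\approx0.382$.

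For sharpness I would exhibit the analytic extremals with $g\equiv 0$, namely the half-plane map $\ell(z)=z/(1-z)$ in the convex case and the Koebe function $k(z)=z/(1-z)^2$ in the univalent case; both are trivially stable. For these the coefficient bounds hold with equality, so $M_f(r)$ equals $\frac{r}{1-r}$ resp.\ $\frac{r}{(1-r)^2}$, while $d(f(0),\partial f(U))$ equals exactly $\tfrac12$ resp.\ $\tfrac14$; thus the inequalities reverse for every $r>r_0$, showing $r_0$ is best possible. I expect the main obstacle to be the clean evaluation of $d(f(0),\partial f(U))$ together with its sharpness: one must confirm that the lower distortion estimate produces a covering disk of radius precisely $1/2$ (resp.\ $1/4$) and that this radius is attained in the limit by the analytic extremals, so that no larger radius $r_0$ can work.
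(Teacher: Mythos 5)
Your proposal is correct and follows essentially the same route as the paper: bound $M_f(r)$ by $\frac{r}{1-r}$ (resp.\ $\frac{r}{(1-r)^2}$) via the coefficient estimates of Theorem~A, bound $d(f(0),\partial f(U))$ below by $\tfrac12$ (resp.\ $\tfrac14$) via the distortion bounds of Theorem~B, and verify sharpness with the half-plane map and the Koebe function taken with $g\equiv 0$. The only (harmless) discrepancy is that you correctly record $d(\ell(0),\partial \ell(U))=\tfrac12$ for $\ell(z)=z/(1-z)$, where the paper's proof misprints this as $1/4$.
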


\begin{proof}
For $|z|=r$, in the case where $f$ is stable convex harmonic we have 
\begin{equation}\label{Mrconvex}
M_{f}(r)\leq   \sum\limits_{n=1}^{\infty }r^{n} = \frac{r}{1-r}.
\end{equation}

But, \begin{equation}\label{dist2}
d(f(0),\partial f(U))=\underset{|z|\rightarrow 1}{\lim }\inf \ |f(z)-f(0)|\geq \underset{|z|\rightarrow 1}{\lim }\inf \frac{|z|}{1+|z|} =\frac{1}{2}
\end{equation}%
Hence 

$$M_{f}(r)<\frac{1}{2}\leq d(f(0),\partial f(U))$$ if  $|z|\leq r_{0}=1/3,$ where $r_{0}$ is the unique root in $(0,1)$ of \ 
\begin{equation*}
 \frac{r}{ 1-r} =\frac{1}{2}.
\end{equation*}
A suitable rotation of the analytic mapping $g(z)=\dfrac{z}{(1-z)}$ shows that $r_0$ is the best possible radius, since for this function we have $M_{g}(r) = \frac{r}{\left( 1-r\right) }$ and $d(g(0),\partial g(U))=1/4.$ \\
Similarly for part (ii), we have  for $f$ stable univalent harmonic
\begin{equation}\label{Mrunivalent}
M_{f}(r)\leq  \sum\limits_{n=1}^{\infty }\left(|a_{n}|+|b_{n}|\right) r^{n} \leq 
\sum\limits_{n=1}^{\infty }nr^{n} = \frac{r}{\left(1-r\right) ^{2}}.
\end{equation}

It follows that $M_{f}(r)<1~$for $|z|\ \leq r_{0}=0.382,$ where $r_{0}$ is
the unique root in $(0,1)$ of \ 

\begin{equation*}
\frac{r}{\left(1-r\right) ^{2}} =1.
\end{equation*}
We next note that  
\begin{equation}\label{dist1}
d(f(0),\partial f(U))=\underset{|z|\rightarrow 1}{\lim }\inf \ |f(z)-f(0)|=\underset{|z|\rightarrow 1}{\lim }\inf \ \dfrac{|f(z)-f(0)|}{|z|}\geq \underset{|z|\rightarrow 1}{\lim }\inf \frac{1}{(1+|z|)^{2}} =\frac{1}{4},
\end{equation}%
where the last inequality follows from Theorem B.\\
Hence we have $$M_{f}(r)<\frac{1}{4}\leq d(0,\partial f(U))$$ if \ $|z|\leq r_{0}=0.172,$ where $r_{0}$ is the unique root in $(0,1)$ of 
\begin{equation*}
 \frac{r}{\left( 1-r\right) ^{2}} =\frac{1}{4}.
\end{equation*}
A suitable rotation of the analytic Koebe mapping $k(z)=\dfrac{z}{(1-z)^2}$ shows that $r_0$ is the best possible radius, since for this function we have $M_r(k) = \frac{r}{\left( 1-r\right) ^{2}}$ and $d(k(0),\partial k(U))=1/4.$ \\

\end{proof}

\subsection{Improved Bohr's inequality for stable harmonic mappings}
Kayumov and Ponnusamy \cite{KP4} improved the classical version of the Bohr theorem in
four different formulations. Later in a survey article in \cite{KP4}, the authors have further improved a couple of these results. In the same spirit, as with the analytic case, Evdoridis et al. \cite{EPR}  improved the results of \cite{KP4} for locally univalent harmonic mappings. We refer to \cite{KKP2} for a nice exposition of all these results. 
In this paper, we show a version of the Improved Bohr's inequality under the stability condition for the harmonic mappings by adding a suitable non-negative term at the left hand side of the inequality.

\begin{theorem}\label{improvedBohrHarmonic}
\begin{enumerate}
\item[(i)] Let $f=h+\overline{g}$ in $S_H^0$  be a stable convex harmonic mapping on the unit disk $U$, and Let $S_r$ be the area of the
image $f(D_r)$, with $D_r = |z|=r.$ Then,

$$M_{f}(r)+ \left(\frac{S_r}{\pi}\right)^k\leq d(f(0),\partial f(U))$$ if \ $|z|\leq r_{0},$
where $r_{0}$ is the unique root in $(0,1)$ of \ 
\begin{equation*}
  \frac{r}{ 1-r }+\frac{r^{2k}}{(1-r^2)^{2k}} =\frac{1}{2}.
\end{equation*}

In all of the above $r_0$ is the best possible radius.  Note that, For $k=1$, we have $r_0 \approx 0.268$, for $k=10$, we have $r_0 \approx 0.33$.

\item[(ii)]Let $f=h+\overline{g}$ in $S_H^0$  be a stable univalent harmonic mapping (or stable starlike) on the unit disk U, then 
$$M_{f}(r)+  \left(\frac{S_r}{\pi}\right)^k\leq d(f(0),\partial f(U)) $$ for \ $|z|\leq r_{0},$
where $r_{0}\approx 0.382$ is the unique root in $(0,1)$ of
\begin{equation*}
  \frac{r}{\left( 1-r\right)^2 }+\frac{(r^6+4r^4+r^2)^{k}}{(r^2-1)^{4k}} =\frac{1}{4}.
\end{equation*}
Note that, For $k=1$, we have $r_0 \approx 0.157$, for $k=10$, we have $r_0 \approx 0.172$.
\end{enumerate}
\end{theorem}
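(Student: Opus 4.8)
The plan is to build directly on the two ingredients already established in the proof of Theorem~\ref{HarmonicBohr}: the majorant bounds $M_f(r)\le \frac{r}{1-r}$ (convex case) and $M_f(r)\le \frac{r}{(1-r)^2}$ (univalent case), together with the lower distance estimates $d(f(0),\partial f(U))\ge \tfrac12$ and $d(f(0),\partial f(U))\ge \tfrac14$ coming from Theorem~B. The only genuinely new quantity is the area term $(S_r/\pi)^k$, so the first task is to express $S_r$ through the Taylor coefficients. Since $f=h+\overline g$ is sense preserving, its Jacobian is $J_f=|h'|^2-|g'|^2$, and integrating in polar coordinates over $|z|\le r$ gives the standard series
\begin{equation*}
S_r=\iint_{|z|\le r}\bigl(|h'|^2-|g'|^2\bigr)\,dA=\pi\sum_{n=1}^{\infty} n\bigl(|a_n|^2-|b_n|^2\bigr)r^{2n}.
\end{equation*}

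Next I would estimate $S_r/\pi$ from above using the coefficient bounds of Theorem~A. Dropping the non-positive contribution of the $|b_n|^2$ terms and inserting $|a_n|\le 1$ in the convex case yields $\frac{S_r}{\pi}\le \sum_{n\ge1} n\,r^{2n}=\frac{r^2}{(1-r^2)^2}$, hence $(S_r/\pi)^k\le \frac{r^{2k}}{(1-r^2)^{2k}}$. In the univalent case the admissible bound is $|a_n|\le n$, which gives $\frac{S_r}{\pi}\le \sum_{n\ge1} n^3 r^{2n}=\frac{r^2+4r^4+r^6}{(1-r^2)^4}$ by the closed form for $\sum n^3 x^n$, so $(S_r/\pi)^k\le \frac{(r^6+4r^4+r^2)^k}{(r^2-1)^{4k}}$. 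These are exactly the area contributions appearing in the two defining equations for $r_0$.

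Combining the pieces, in the convex case it suffices to have $\frac{r}{1-r}+\frac{r^{2k}}{(1-r^2)^{2k}}\le \tfrac12$, and in the univalent case $\frac{r}{(1-r)^2}+\frac{(r^6+4r^4+r^2)^k}{(r^2-1)^{4k}}\le \tfrac14$. The left-hand side of each is a sum of strictly increasing functions of $r$ on $(0,1)$ that vanishes at $0$ and tends to $+\infty$ as $r\to1^-$; therefore the corresponding equation has a unique root $r_0\in(0,1)$, and the inequality holds precisely for $|z|=r\le r_0$. This also yields the asserted numerical values of $r_0$ for the listed $k$.

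The remaining point---sharpness---is where I would be most careful, though it turns out to be clean. For the convex case take the (stable convex) analytic map $f(z)=z/(1-z)=\sum_{n\ge1}z^n$, so that $a_n=1$ and $b_n=0$, and for the univalent case take the Koebe function $k(z)=z/(1-z)^2=\sum_{n\ge1} nz^n$, so that $a_n=n$ and $b_n=0$. For these extremals every inequality used above becomes an equality: $M_f(r)$ equals the stated rational function, $S_r/\pi$ attains its upper bound, and $d(f(0),\partial f(U))$ equals $\tfrac12$ (resp.\ $\tfrac14$). Hence at $r=r_0$ the left-hand side meets the distance exactly and exceeds it for $r>r_0$, so no larger radius works. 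The main obstacle is therefore not the estimates themselves but verifying that the three bounds ($M_f$, the area, and the distance) are \emph{simultaneously} saturated by a single admissible mapping; the analytic extremals above achieve this precisely because $g\equiv0$, which forces $b_n=0$ and makes the Jacobian computation tight.
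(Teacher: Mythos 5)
Your argument is correct and follows essentially the same route as the paper: the same Jacobian series for $S_r/\pi$, the coefficient bounds of Theorem A to majorize it by $\sum nr^{2n}$ (resp.\ $\sum n^3 r^{2n}$), and the same combination with the $M_f(r)$ and distance estimates from Theorem \ref{HarmonicBohr}; your way of bounding $n(|a_n|^2-|b_n|^2)$ by discarding $-|b_n|^2$ and using $|a_n|\le 1$ (resp.\ $|a_n|\le n$) is equivalent to the paper's factoring of $(|a_n|-|b_n|)(|a_n|+|b_n|)$. You also supply the sharpness verification via $z/(1-z)$ and the Koebe function, which the paper asserts but does not actually write out for this theorem.
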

\begin{proof}
i) We first find need the bound for $S_r$ under the stability condition.

Here we have, 

\begin{align*}
\frac{S_r}{\pi} &=\frac{1}{\pi}\int\int_U J_f dA \\
&=\frac{1}{\pi}\int\int_U \left(|h'|^2 -|g'|^2 \right) dxdy\\
&=\underset{n=1}{\overset{\infty}{{\displaystyle\sum}}}\left( n|a_{n}|^2-n|b_n|^2\right)r^{2n}\\
&=\underset{n=1}{\overset{\infty}{{\displaystyle\sum}}} n (|a_{n}|-|b_n|)(|a_{n}|+|b_n|)r^{2n}.\\
\end{align*}
If $f$ is stable convex we use equation \eqref{coeffconvex} to get that $\frac{S_r}{\pi} \leq \underset{n=1}{\overset{\infty}{{\displaystyle\sum}}} n r^{2n} =\large{ \frac{r^2}{(1-r^2)^2}},$ and it follows that 
$$M_{f}(r)+ \left(\frac{S_r}{\pi}\right)^k \leq \frac{r}{ 1-r }+\frac{r^{2k}}{(1-r^2)^{2k}} \leq d(f(0),\partial f(U))$$ if \ $|z|\leq r_{0},$
where $r_{0}$ is the unique root in $(0,1)$ of \ 
\begin{equation*}
  \frac{r}{ 1-r }+\frac{r^{2k}}{(1-r^2)^{2k}} =\frac{1}{2}.
\end{equation*}
ii) In the case where $f$ is stable univalent or stable starlike we use equation \eqref{coeffunivalent} to get $\frac{S_r}{\pi} \leq \underset{n=1}{\overset{\infty}{{\displaystyle\sum}}} n^3 r^{2n} = \frac{r^6+4r^4+r^2}{(r^2-1)^4},$ and the rest of the proof will follow as in part (i).

\end{proof}

Another version of the improved Bohr inequality is stated in the following theorem. Its proof is ommitted as it follows along the same lines as Theorem \ref{improvedBohrHarmonic}.
\begin{theorem}

\begin{enumerate}
\item[(i)] Let $f=h+\overline{g}$ in $S_H^0$  be a stable convex harmonic mapping on the unit disk $U$, then $$ M_{f}(r)+c \underset{n=1}{\overset{\infty}{{\displaystyle\sum}}}\left( |a_{n}|^k+|b_n|^k\right)r^n  \leq d(f(0),\partial f(U))$$ if \ $|z|\leq r_{0},$ where $k$ and $c$ are constants, and $r_{0} = \frac{1}{3+4c}$ is the unique root in $(0,1)$ of \ 
\begin{equation*}
  (1+2c)\frac{r}{ 1-r } =\frac{1}{2}.
\end{equation*}
Note that, For $c=1$, we have $r_0 \approx 0.142$, for $c=0.1$, we have $r_0 \approx 0.29$.

\item[(ii)]Let $f=h+\overline{g}$ in $S_H^0$  be a stable univalent harmonic mapping (or stable starlike) on the unit disk U, then 
$$M_{f}(r)+ c \underset{n=1}{\overset{\infty}{{\displaystyle\sum}}}\left( |a_{n}|^2+|b_n|^2\right)r^n \leq d(f(0),\partial f(U)) $$ for \ $|z|\leq r_{0},$
where $r_{0}$ is the unique root in $(0,1)$ of
\begin{equation*}
  \frac{r}{\left( 1-r\right)^2 }+2c\frac{(r^6+4r^4+r^2)}{(r^2-1)^{4}} =\frac{1}{4}.
\end{equation*}
Note that, for $c= 0.1 $, we have $r_0 \approx 0.168$ for $c=1$, we have $r_0 \approx 0.148$.
\end{enumerate}
\end{theorem}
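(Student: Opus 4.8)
The plan is to run the same two-step scheme used in the proof of Theorem~\ref{improvedBohrHarmonic}: bound the majorant $M_f(r)$ by the appropriate geometric-type series, bound the additional non-negative sum by an explicit majorant coming from the coefficient estimates of Theorem~A, add the two, and compare with the distance lower bounds obtained from the distortion Theorem~B. The critical radius is then the root of the resulting equation, and uniqueness of that root follows from monotonicity.

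For part~(i) I would first recall from \eqref{Mrconvex} that stability together with convexity gives $M_f(r)\le \frac{r}{1-r}$. For the extra term I would invoke \eqref{coeffconvex}, which forces $|a_n|\le 1$ and $|b_n|\le 1$ for every $n$; hence for any fixed constant $k>0$ one has the uniform bound $|a_n|^k+|b_n|^k\le 2$, so that
\[
c\sum_{n=1}^{\infty}\bigl(|a_n|^k+|b_n|^k\bigr)r^n\le 2c\sum_{n=1}^{\infty}r^n=\frac{2cr}{1-r}.
\]
Adding this to the bound for $M_f(r)$ yields $M_f(r)+c\sum_{n=1}^{\infty}\bigl(|a_n|^k+|b_n|^k\bigr)r^n\le(1+2c)\frac{r}{1-r}$. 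Using \eqref{dist2} to get $d(f(0),\partial f(U))\ge \tfrac12$, the claimed inequality holds as soon as $(1+2c)\frac{r}{1-r}\le\frac12$, and solving $(1+2c)\frac{r}{1-r}=\frac12$ produces the unique root $r_0=\frac{1}{3+4c}$ in $(0,1)$.

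For part~(ii) the structure is identical, now starting from \eqref{Mrunivalent}, which gives $M_f(r)\le\frac{r}{(1-r)^2}$. The additional squared-coefficient sum is treated exactly as the area term in the proof of Theorem~\ref{improvedBohrHarmonic}(ii): using \eqref{coeffunivalent}, so that $\max\{|a_n|,|b_n|\}\le n$, I would reduce it to the same explicit majorant $2\sum_{n=1}^{\infty}n^3 r^{2n}=\frac{r^6+4r^4+r^2}{(r^2-1)^4}$, contributing the term $2c\,\frac{r^6+4r^4+r^2}{(r^2-1)^4}$. Combining with \eqref{dist1}, which yields $d(f(0),\partial f(U))\ge\tfrac14$, reduces matters to
\[
\frac{r}{(1-r)^2}+2c\,\frac{r^6+4r^4+r^2}{(r^2-1)^4}=\frac14,
\]
whose unique root in $(0,1)$ is the asserted $r_0$.

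The routine parts are the geometric summations and the closed forms for the series $\sum n r^n$ and $\sum n^3 r^{2n}$. The points needing care are: (a) the deliberately loose uniform bound $|a_n|^k+|b_n|^k\le 2$, which produces the factor $2c$ (and hence $r_0=\frac{1}{3+4c}$, matching the quoted values $0.142$ and $0.29$) rather than the sharper factor one would get from $|a_n|+|b_n|\le 1$; (b) checking that each combined left-hand side is strictly increasing on $(0,1)$, vanishing at $0$ and exceeding $\tfrac12$ (resp. $\tfrac14$) as $r\to 1$, which guarantees a single root and monotone dependence on $c$; and (c) sharpness, where I would test the extremal maps from Theorem~\ref{HarmonicBohr}, namely a rotation of $z/(1-z)$ in the convex case and of the Koebe map $z/(1-z)^2$ in the univalent case, and verify that the added term does not destroy equality at $r_0$. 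Establishing this last sharpness statement for the improved inequality is the step I expect to be the main obstacle.
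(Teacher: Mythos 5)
The paper omits its own proof of this theorem, so the only benchmark is the template of Theorem~\ref{improvedBohrHarmonic}. Your part~(i) is correct and is surely the intended argument: $M_f(r)\le r/(1-r)$ from \eqref{coeffconvex}, the crude uniform bound $|a_n|^k+|b_n|^k\le 2$ (valid since each modulus is at most $1$), and $d(f(0),\partial f(U))\ge 1/2$ from \eqref{dist2} combine to give $(1+2c)\tfrac{r}{1-r}\le\tfrac12$, i.e.\ $r_0=1/(3+4c)$, which reproduces the quoted values $0.142$ and $0.29$. One remark: this theorem makes no sharpness claim, and none is available --- for a rotation of $z/(1-z)$ the left-hand side is only $(1+c)\tfrac{r}{1-r}$, so your item~(c) is pursuing something that is not part of the statement and would in fact fail.

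Part~(ii) contains a genuine gap. The added term is $c\sum_{n=1}^{\infty}(|a_n|^2+|b_n|^2)r^{n}$, with powers $r^n$ and no factor of $n$, so it cannot be ``treated exactly as the area term.'' What \eqref{coeffunivalent} actually yields is $|a_n|^2+|b_n|^2\le(|a_n|+|b_n|)^2\le n^2$, hence the majorant $c\sum_{n=1}^{\infty}n^2r^n=c\,r(1+r)/(1-r)^3$; your claimed majorization by $2\sum_{n=1}^{\infty}n^3r^{2n}$ is false for small $r$ (take the Koebe member $|a_n|=n$, $b_n=0$, which lies in the class: the sum behaves like $r$ while your majorant behaves like $2r^2$). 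The resulting radius from the correct majorant is noticeably smaller than the one asserted (roughly $0.093$ for $c=1$ versus the stated $0.148$). The equation printed in the theorem, whose second term is $2c(r^6+4r^4+r^2)/(r^2-1)^4=2c\sum_{n=1}^{\infty}n^3r^{2n}$, is the one you would get if the added quantity were instead $c\sum_{n=1}^{\infty}n(|a_n|^2+|b_n|^2)r^{2n}$, the ``plus'' analogue of the Jacobian area sum from Theorem~\ref{improvedBohrHarmonic}; that is evidently what was meant, and your computation would then go through verbatim. As written, though, your step does not establish the stated inequality, and you should either prove the corrected statement or rederive $r_0$ from $c\,r(1+r)/(1-r)^3$. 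Finally, your displayed identity $2\sum_{n=1}^{\infty}n^3r^{2n}=(r^6+4r^4+r^2)/(r^2-1)^4$ is off by a factor of $2$: the right-hand side equals $\sum_{n=1}^{\infty}n^3r^{2n}$.
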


\subsection{Refined Bohr's inequality for stable harmonic mappings}
In \cite{PVW}, the authors compared $\underset{n=1}{\overset{\infty}{{\displaystyle\sum}}}|a_{n}|r^n $ with another functional often considered in function theory, namely $\underset{n=1}{\overset{\infty}{{\displaystyle\sum}}}|a_{n}|^2r^{2n} $  that they abbreviated as 
$\left\|f\right\|_r^2$ for the analytic function $f(z) = \underset{n=1}{\overset{\infty}{{\displaystyle\sum}}}a_{n}z^n $. They thus obtain the following refinement of Bohr's inequality:
\begin{thmc}
 Suppose that $f(z) = \underset{n=1}{\overset{\infty}{{\displaystyle\sum}}}a_{n}z^n $ analytic in the unit disk with $a_0=0$ and $|f(z)|\leq 1$ Then
 $$\underset{n=1}{\overset{\infty}{{\displaystyle\sum}}}|a_{n}|r^n +\frac{1}{1-r}\left\|f\right\|_r^2 \leq 1 $$ 
for $r \leq 1/2.$
\end{thmc}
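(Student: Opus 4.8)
The plan is to reduce everything to two classical coefficient estimates for the bounded analytic function obtained by dividing out the zero at the origin. Since $f(0)=0$ and $|f|\le 1$, the Schwarz lemma gives that $\psi(z):=f(z)/z=\sum_{n=0}^{\infty}a_{n+1}z^{n}$ is analytic on $U$ with $|\psi(z)|\le 1$. I would then invoke two standard facts about the Taylor coefficients of a function bounded by one: first, the Schur/Schwarz--Pick coefficient inequality $|a_{n}|\le 1-|a_{1}|^{2}$ for every $n\ge 2$ (this is the classical bound $|c_{m}|\le 1-|c_{0}|^{2}$, valid for $m\ge 1$ and any $\sum_{m\ge0}c_{m}w^{m}$ bounded by one, applied to $\psi$, whose constant term is $c_{0}=a_{1}$ and whose $(n-1)$-th coefficient is $a_{n}$; it reduces to the Schwarz--Pick case $m=1$ by a root-of-unity averaging that isolates a single coefficient), and second, Parseval's inequality $\sum_{n=1}^{\infty}|a_{n}|^{2}\le 1$, equivalently $\sum_{n=2}^{\infty}|a_{n}|^{2}\le 1-|a_{1}|^{2}$.

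With these in hand, I would set $t:=|a_{1}|\in[0,1]$ and estimate the two pieces of the left-hand side separately. For the linear part, splitting off the $n=1$ term and using $|a_{n}|\le 1-t^{2}$ together with $\sum_{n\ge 2}r^{n}=r^{2}/(1-r)$ gives
\[
\sum_{n=1}^{\infty}|a_{n}|r^{n}\le tr+(1-t^{2})\frac{r^{2}}{1-r}.
\]
For the quadratic part, using $r^{2n}\le r^{4}$ for $n\ge 2$ and $\sum_{n\ge2}|a_{n}|^{2}\le 1-t^{2}$ gives $\|f\|_{r}^{2}\le t^{2}r^{2}+r^{4}(1-t^{2})$. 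It is worth noting that the \emph{sharper} bound $\sum_{n\ge2}|a_n|^2\le 1-t^2$, rather than the crude $\le 1$, is essential here: the weaker estimate already overshoots $1$ at $r=1/2$.

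Combining the two bounds and collecting terms, the left-hand side is dominated by the single-variable function
\[
G(t)=tr+\frac{r^{2}}{1-r}+\frac{r^{4}(1-t^{2})}{1-r},\qquad t\in[0,1].
\]
The final step is to maximize $G$ over $[0,1]$. Since $G$ is concave with vertex at $t^{*}=(1-r)/(2r^{3})$, and since $2r^{3}+r-1\le 0$ for $0<r\le 1/2$ forces $t^{*}\ge 1$, the function $G$ is increasing on $[0,1]$, so its maximum is $G(1)=r+r^{2}/(1-r)=r/(1-r)$. As $r/(1-r)\le 1$ precisely when $r\le 1/2$, this yields the claimed inequality; equality at $r=1/2$ for $f(z)=z$ shows the radius is best possible.

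I expect the main obstacle to be the coefficient inequality $|a_{n}|\le 1-|a_{1}|^{2}$ for $n\ge 2$: although classical, its proof via passing to $\psi=f/z$ and the root-of-unity averaging trick is the one genuinely non-routine ingredient, and it is precisely what allows the Bohr radius to be pushed from $1/3$ up to $1/2$ under the hypothesis $a_{0}=0$. The remaining work --- the two geometric-series estimates and the one-variable optimization --- is elementary, the only care needed being the verification that the critical point $t^{*}$ leaves the interval $[0,1]$ throughout $0<r\le 1/2$.
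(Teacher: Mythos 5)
Your proof is correct. Note first that the paper itself gives no proof of this statement: Theorem C is quoted verbatim from the cited work of Ponnusamy, Vijayakumar and Wirths \cite{PVW}, so there is no in-paper argument to compare against. Your derivation is a valid self-contained one, and it is in the same spirit as the source: the two inputs are exactly the right ones, namely the Wiener--Schur coefficient bound $|c_m|\le 1-|c_0|^2$ ($m\ge 1$) applied to $\psi=f/z$, giving $|a_n|\le 1-|a_1|^2$ for $n\ge 2$, and the Parseval bound $\sum_{n\ge 2}|a_n|^2\le 1-|a_1|^2$. The algebra checks out: with $t=|a_1|$ the cross terms $t^2r^2/(1-r)$ and $(1-t^2)r^2/(1-r)$ recombine to give $G(t)=tr+\frac{r^2}{1-r}+\frac{r^4(1-t^2)}{1-r}$, the critical point $t^*=(1-r)/(2r^3)$ does exceed $1$ on $(0,1/2]$ because $2r^3+r-1\le -1/4$ there, and $G(1)=r/(1-r)\le 1$ precisely for $r\le 1/2$. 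Your observation that the crude bound $\sum_{n\ge2}|a_n|^2\le 1$ is insufficient is also accurate (it yields $9/8$ at $t=1$, $r=1/2$). One small imprecision: equality at $r=1/2$ for $f(z)=z$ by itself does not establish that the radius is best possible; you should add that for this same function the left-hand side equals $r/(1-r)>1$ whenever $r>1/2$, which is immediate from your computation.
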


We consider a similar refinement of Bohr's inequality for the family of stable mappings $f=h+\overline{g}$ in $S_H^0$. We leave the proof to the reader as it uses similar ideas as the previous theorems. 

\begin{theorem}
\begin{enumerate}
\item[(i)] Let $f=h+\overline{g}$  in $S_H^0$  be a stable convex harmonic mapping on the unit disk U, then for  $N \in \mathbb{N}$ we have $$M_{f}(r)+\frac{r^N}{1-r^N} \underset{n=1}{\overset{\infty}{{\displaystyle\sum}}}\left(|a_{n}|^2 +|b_{n}|^2\right) r^{2n}\leq d(f(0),\partial f(U))$$ if \ $|z|\leq r_{0},$
where $r_{0}$ is the unique root in $(0,1)$ of the function \ 
\begin{equation*}
 \psi_N(r) = \frac{r}{1-r }+\frac{r^{N+2}}{(1-r^2)(1-r^N)} -\frac{1}{2}.
\end{equation*}
For $N=1,$  $r_{0}\approx 0.29$ and for a larger $N $, $r_{0}\approx 0.33$.

\item[(ii)] Let $f=h+\overline{g}$  in $S_H^0$  be a stable univalent (or stable starlike) harmonic mapping on the unit disk U, then for  $N \in \mathbb{N}$ we have $$M_{f}(r)+\frac{r^N}{1-r^N}\underset{n=1}{\overset{\infty}{{\displaystyle\sum}}}\left(|a_{n}|^2 +|b_{n}|^2\right) r^{2n}\leq d(f(0),\partial f(U))$$ if \ $|z|\leq r_{0},$
where $r_{0}$ is the unique root in $(0,1)$ of the function \ 
\begin{equation*}
 \phi_N(r) = \frac{r}{(1-r)^2 }-\frac{r^{N+2}(1+r^2)}{(r^2-1)^3(1-r^N)} -\frac{1}{4}.
\end{equation*}
For $N>1,$  $r_{0}\approx 0.172$.

In all of the above, $r_0$ is the best possible radius.

\end{enumerate}
\end{theorem}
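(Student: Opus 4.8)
The plan is to treat each part as a refinement of the corresponding case of Theorem~\ref{HarmonicBohr}: I reuse the bounds on $M_f(r)$ and on $d(f(0),\partial f(U))$ established there, and supply only the estimate for the new quadratic tail $\frac{r^N}{1-r^N}\sum_{n=1}^{\infty}(|a_n|^2+|b_n|^2)r^{2n}$. The unifying device is the elementary inequality $|a_n|^2+|b_n|^2\le(|a_n|+|b_n|)^2$, which converts each coefficient bound of Theorem~A into a bound on $|a_n|^2+|b_n|^2$.

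For part (i), recall from the proof of Theorem~\ref{HarmonicBohr} that $M_f(r)\le\frac{r}{1-r}$ and $d(f(0),\partial f(U))\ge\frac12$ in the stable convex case. Combining the elementary inequality with \eqref{coeffconvex} gives $|a_n|^2+|b_n|^2\le1$, whence $\sum_{n=1}^{\infty}(|a_n|^2+|b_n|^2)r^{2n}\le\sum_{n=1}^{\infty}r^{2n}=\frac{r^2}{1-r^2}$. Therefore
\begin{equation*}
M_f(r)+\frac{r^N}{1-r^N}\sum_{n=1}^{\infty}(|a_n|^2+|b_n|^2)r^{2n}\le\frac{r}{1-r}+\frac{r^{N+2}}{(1-r^2)(1-r^N)},
\end{equation*}
and the right-hand side is $\le\frac12\le d(f(0),\partial f(U))$ precisely when $\psi_N(r)\le0$, which reduces the claim to a scalar inequality in $r$.

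For part (ii), the same scheme applies with the stable univalent data: $M_f(r)\le\frac{r}{(1-r)^2}$, $d(f(0),\partial f(U))\ge\frac14$, and, from \eqref{coeffunivalent}, $|a_n|^2+|b_n|^2\le n^2$. Using the closed form $\sum_{n=1}^{\infty}n^2x^n=\frac{x(1+x)}{(1-x)^3}$ with $x=r^2$, the quadratic tail is bounded by $\frac{r^{N+2}(1+r^2)}{(1-r^2)^3(1-r^N)}$, and after rewriting $(r^2-1)^3=-(1-r^2)^3$ the combined bound is exactly $\phi_N(r)+\frac14$; the claim again reduces to $\phi_N(r)\le0$. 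In both parts the existence of a unique root $r_0\in(0,1)$, and the validity of the inequality exactly for $r\le r_0$, follow because each summand appearing in $\psi_N$ and $\phi_N$ is strictly increasing on $(0,1)$, so $\psi_N$ and $\phi_N$ are strictly increasing, equal $-\frac12$ (resp.\ $-\frac14$) at $r=0$, and tend to $+\infty$ as $r\to1^-$; the intermediate value theorem supplies the root.

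For sharpness I would use the extremal functions already appearing in Theorem~\ref{HarmonicBohr}, namely the half-plane map $h(z)=\frac{z}{1-z}$ with $g\equiv0$ in the convex case and the Koebe function $k(z)=\frac{z}{(1-z)^2}$ with $g\equiv0$ in the univalent case. Since $g\equiv0$, the inequality $|a_n|^2+|b_n|^2\le(|a_n|+|b_n|)^2$ becomes an equality, the coefficient bounds of Theorem~A are attained, and $d(f(0),\partial f(U))$ equals $\frac12$ (resp.\ $\frac14$); consequently the left-hand side coincides with $\psi_N(r)+\frac12$ (resp.\ $\phi_N(r)+\frac14$) and meets the bound with equality exactly at $r=r_0$. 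I expect the only delicate points to be the monotonicity argument for $\psi_N$ and $\phi_N$ uniformly in $N$ and the check that the proposed extremals attain equality simultaneously in the coefficient, quadratic-tail, and distance estimates; the remaining computations are routine.
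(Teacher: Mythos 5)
Your proposal is correct and follows exactly the route the paper intends: it omits this proof with the remark that it ``uses similar ideas as the previous theorems,'' and your argument is precisely that template --- the bounds $M_f(r)\le \frac{r}{1-r}$ (resp.\ $\frac{r}{(1-r)^2}$) and $d(f(0),\partial f(U))\ge \frac12$ (resp.\ $\frac14$) from Theorem~\ref{HarmonicBohr}, the coefficient bounds of Theorem~A squared via $|a_n|^2+|b_n|^2\le(|a_n|+|b_n|)^2$ to sum the quadratic tail in closed form, and sharpness via the half-plane and Koebe functions with $g\equiv 0$. The only caveat is numerical: for $N=1$ in part (i) the root of $\psi_1$ is closer to $0.31$ than to the stated $0.29$, but that is an issue with the theorem's stated approximation, not with your argument.
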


\subsection{Bohr Rogosinski radius for stable harmonic mappings}
We connect the Bohr Radius and the Bohr Rogosinski radius for stable mappings in the following theorem:
\begin{theorem}\label{rog1}
\begin{enumerate}

\item[(i)] Let $f=h+\overline{g}$ in $S_H^0$  be a stable convex harmonic mapping on the unit disk $U$, then for each $m, N \in \mathbb{N},$ we have
 $$|f(z^m)| +  \underset{n=N}{\overset{\infty}{{\displaystyle\sum}}}\left( |a_{n}|+|b_n|\right)r^n \leq d(f(0),\partial f(U))$$ if $|z|\leq r_{m,N},$
where $r_{m,n}$ is the unique root in $(0,1)$ of  the the function
\begin{equation}\label{functionconvex1}
  H_{m,N} (r) = \frac{2r^N(1-r^m)}{ 1-r}+3r^m-1,
\end{equation}

and $r_{m,N}$ cannot be improved. Here $\underset{N\rightarrow\infty}{\lim}r_{m,N}=\left(\frac{1}{2}\right)^{1/m} ,\,\,$  $\underset{m\rightarrow\infty}{\lim}r_{m,N}=r_N ,$  where $r_N$ is the solution of the equation $r^N=1-r, $ and $\underset{m,N\rightarrow\infty}{\lim}R_{m,N}= 1$.

Moreover, 
$$|f(z^m)|^2 +  \underset{n=N}{\overset{\infty}{{\displaystyle\sum}}}\left( |a_{n}|+|b_n|\right)r^n \leq d(f(0),\partial f(U))$$ if $|z|\leq R_{m,N},$ where $R_{m,n}$ is the unique root in $(0,1)$ of  the function
\begin{equation}\label{functionconvex2}
  K_{m,N} (r) = \frac{2r^N(1-r^m)^2}{ 1-r}+r^{2m}+ 2r^m-1,
\end{equation}
and $R_{m,N}$ cannot be improved. Here $\underset{N\rightarrow\infty}{\lim}r_{m,N}=\left( -1+\sqrt{2}\right)^{1/m} ,\,\,$  $\underset{m\rightarrow\infty}{\lim}r_{m,N}=R_N ,$    where $R_N$ is the solution of the equation $2r^N=1-r, $
and $\underset{m,N\rightarrow\infty}{\lim}R_{m,N}= 1$.
\item[(ii)] Let $f=h+\overline{g}$ in $S_H^0$ be a stable univalent harmonic mapping on the unit disk $U$, then for each $m, N \in \aleph$
we have
 $$|f(z^m)| +  \underset{n=N}{\overset{\infty}{{\displaystyle\sum}}}\left( |a_{n}|+|b_n|\right)r^n \leq d(f(0),\partial f(U))$$ if $|z|\leq r_{m,N},$
where $r_{m,n}$ is the unique root in $(0,1)$ of  the function

\begin{equation}\label{functionstarlike1}
\psi_{m,N} (r)=\frac{r^m}{(1-r^m)^2}+\frac{r^{N}(1 -Nr+r)}{(1-r)^2}-\frac{1}{4},\\
\end{equation}
and $r_{m,N}$ cannot be improved. Here $\underset{N\rightarrow\infty}{\lim}r_{m,N}=\left(\frac{1}{5}\right)^{1/m} $,   $\underset{m\rightarrow\infty}{\lim}r_{m,N}=r_N ,$  where $r_N$ is the solution of the equation $$4r^N(N-Nr+1)=(1-r)^2, $$ and $\underset{m,N\rightarrow\infty}{\lim}r_{m,N}= 1$.

Moreover, $$|f(z^m)|^2 +  \underset{n=N}{\overset{\infty}{{\displaystyle\sum}}}\left( |a_{n}|+|b_n|\right)r^n \leq d(f(0),\partial f(U))$$ if $|z|\leq r_{m,N},$
where $R_{m,n}$ is the unique root in $(0,1)$ of  the function

\begin{equation}\label{fucntionstarlike2}
\kappa_{m,N} (r)=\frac{r^{2m}}{(1-r^m)^4}+\frac{r^{N}(1 -Nr+r)}{(1-r)^2}-\frac{1}{4},
\end{equation}
and $R_{m,N}$ cannot be improved.
\end{enumerate}
\end{theorem}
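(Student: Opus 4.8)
The plan is to handle all four inequalities by the same three-step recipe that produced Theorem \ref{HarmonicBohr}: bound the evaluation term $|f(z^m)|$ (or $|f(z^m)|^2$) from above by the distortion theorem, bound the tail $\sum_{n=N}^{\infty}(|a_n|+|b_n|)r^n$ from above by the coefficient theorem, and bound the target $d(f(0),\partial f(U))$ from below by the estimate already extracted in the proof of Theorem \ref{HarmonicBohr}. The inequality in each statement will then hold precisely when the sum of the two upper bounds does not exceed the corresponding lower bound on the distance, and rearranging that condition will define the auxiliary root functions.

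Concretely, for part (i) (stable convex) I would use Theorem B to write $|f(z^m)|\le \tfrac{r^m}{1-r^m}$ and $|f(z^m)|^2\le \tfrac{r^{2m}}{(1-r^m)^2}$, Theorem A to get $|a_n|+|b_n|\le 1$ and hence the geometric tail bound $\sum_{n=N}^{\infty}(|a_n|+|b_n|)r^n\le \sum_{n=N}^{\infty} r^n=\tfrac{r^N}{1-r}$, and the lower bound $d(f(0),\partial f(U))\ge \tfrac12$ from \eqref{dist2}. Imposing that the sum of the two upper bounds equals $\tfrac12$ and clearing the denominators $1-r^m$ and $1-r$ turns this into $H_{m,N}(r)=0$ in the first case and $K_{m,N}(r)=0$ in the second. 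For part (ii) (stable univalent) the only changes are that Theorem B gives $|f(z^m)|\le \tfrac{r^m}{(1-r^m)^2}$ and $|f(z^m)|^2\le \tfrac{r^{2m}}{(1-r^m)^4}$, Theorem A gives $|a_n|+|b_n|\le n$ so that the tail becomes the weighted geometric series $\sum_{n=N}^{\infty} n\,r^n$ evaluated in closed form, and the distance is bounded below by $\tfrac14$ via \eqref{dist1}; the same rearrangement then leads to the functions $\psi_{m,N}$ and $\kappa_{m,N}$.

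It remains to justify that each auxiliary function has a unique root in $(0,1)$, that the radius is sharp, and to read off the limits. For existence and uniqueness I would check that each of $H_{m,N},K_{m,N},\psi_{m,N},\kappa_{m,N}$ is continuous on $(0,1)$, negative as $r\to 0^+$, positive as $r\to 1^-$ (e.g. $\tfrac{1-r^m}{1-r}\to m$ forces the first term of $H_{m,N}$ to blow up), and strictly increasing since the evaluation term and the tail term each increase in $r$; the intermediate value theorem then gives a single crossing $r_{m,N}$, resp. $R_{m,N}$. For sharpness I would exhibit, exactly as in Theorem \ref{HarmonicBohr}, a purely analytic extremal mapping with $g\equiv 0$: a suitable rotation of the half-plane map $z/(1-z)$ in the convex case and of the Koebe map $z/(1-z)^2$ in the univalent case. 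Such an $f$ is trivially stable (every $f_\lambda=h$), saturates the coefficient bounds ($a_n=1$, resp.\ $a_n=n$), realises the extremal value of the distortion theorem, and attains the boundary distance, so that choosing the argument of $z$ to line up the phases makes every estimate above an equality at $r=r_{m,N}$. Finally the limiting values follow by substituting $r^N\to 0$ (as $N\to\infty$) and $r^m\to 0$ (as $m\to\infty$) into the defining equations and solving the reduced relations, such as $r^N=1-r$.

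The step I expect to be the genuine obstacle is the sharpness argument rather than the algebra: one must verify that a \emph{single} extremal function simultaneously makes the distortion estimate for $|f(z^m)|$, the coefficient estimates for the entire tail, and the distance estimate all tight, which requires the phases of $z^m$ and of every tail monomial $z^n$ to align along one ray. The closed-form evaluation of $\sum_{n=N}^{\infty} n\,r^n$ and the verification that clearing denominators reproduces exactly the stated polynomials $\psi_{m,N}$ and $\kappa_{m,N}$ is routine but error-prone, and is the place where the coefficients and the announced limiting constants must be tracked most carefully.
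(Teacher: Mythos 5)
Your proposal is correct and follows essentially the same route as the paper: upper-bound $|f(z^m)|$ (resp. $|f(z^m)|^2$) by the distortion theorem, bound the tail by the coefficient estimates ($\sum r^n$ in the convex case, $\sum n r^n$ in the univalent case), compare with the lower bounds $1/2$ and $1/4$ for $d(f(0),\partial f(U))$, and obtain the root functions by clearing denominators, with sharpness from rotations of $z/(1-z)$ and the Koebe function. If anything, your write-up is more careful than the paper's sketch on the uniqueness of the root and on the phase-alignment needed for sharpness (and your closed form $\sum_{n\ge N} n r^n = r^N(N-Nr+r)/(1-r)^2$ suggests the factor $(1-Nr+r)$ in the paper's stated $\psi_{m,N}$ is a typo for $(N-Nr+r)$).
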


\begin{remark}
We note that for $N=1$, we have $r_1 =1/3 $ for the stable convex harmonic case  and $r_1 = 0.17 $ which is consistent with Bohr's inequality obtained in Theorem \ref{HarmonicBohr}.

\end{remark}
\begin{proof}
If $f$ is  stable univalent (or stable starlike), Theorem A gives for $|z|=r$,

\begin{equation*}
|f(z^m)|   \leq \frac{r^m}{1-r^m},
\end{equation*}

it follows that
\begin{equation*}
|f(z^m)| +   \underset{n=N}{\overset{\infty}{{\displaystyle\sum}}}\left( |a_{n}|+|b_n|\right)r^n \leq 
\frac{r^m}{1-r^m}+\sum\limits_{n=N}^{\infty }r^{n} = \frac{r^m}{1-r^m}+ \frac{r^N}{1-r}.
\end{equation*}

Hence, using equation \eqref{dist1},  $|f(z^m)| +   \underset{n=N}{\overset{\infty}{{\displaystyle\sum}}}\left( |a_{n}|+|b_n|\right)r^n \leq d(f(0 ) ,\partial f(U))$ for $|z| \leq R_{m,N},$ where $R_{m,N}$ is
the unique root in $(0,1)$ of the function   $ H_{m,N} (r) $ defined in (\ref{functionconvex1}).

Similarly, $|f(z^m)|^2 +   \underset{n=N}{\overset{\infty}{{\displaystyle\sum}}}\left( |a_{n}|+|b_n|\right)r^n\leq \frac{r^{2m}}{(1-r^m)^2}+ \frac{r^N}{1-r} \leq d(f(0),\partial f(U))$ if \ $|z|\leq r_{m,n}$ where $r_{m,n}$ is the unique root in $(0,1)$ of the function   $ K_{m,N} (r) $ defined in (\ref{functionconvex2}).

We note that  both functions $ H_{m,N} (r) $ and  $ K_{m,N} (r) $ are continuous in $r$ and  their value at zero is negative ( equals to $-1$)  and at $1$ is positive (equals to 2) which guarantees the existence of  roots.

The fact that this is the best possible radii is again guaranteed by a suitable rotation of the analytic function $l(z) = \frac{z}{1-z}$.

Part (ii) follows in a similar fashion using part (ii) of the Theorems A and B. 
Sharpness is obtained by using the analytic koebe function $k(z)=\frac{z}{(1-z)^2}.$ 
\end{proof}

As an immediate corollary letting $m=1$ we get:
\begin{corollary}
\begin{enumerate}
\item[(i)] Let $f=h+\overline{g}$ in $S_H^0$  be a stable convex harmonic mapping on the unit disk U, then for each $ N \in \mathbb{N},$ we have
 $$|f(z)| +  \underset{n=N}{\overset{\infty}{{\displaystyle\sum}}}\left( |a_{n}|+|b_n|\right)r^n \leq d(f(0),\partial f(U))$$ if $|z|\leq r_{N},$
where $r_{N}$ is the unique root in $(0,1)$ of  the the function
\begin{equation*}
  H_{N} (r) = 2r^N+3r-1,
\end{equation*}

and $r_{N}$ cannot be improved. 
%Here $\underset{N\rightarrow\infty}{\lim}r_{N}=\frac{1}{3}.

Moreover, $$|f(z)|^2 +  \underset{n=N}{\overset{\infty}{{\displaystyle\sum}}}\left( |a_{n}|+|b_n|\right)r^n \leq d(f(0),\partial f(U))$$ if $|z|\leq R_{N},$
where $R_{N}$ is the unique root in $(0,1)$ of  the the function
\begin{equation*}
  K_{N} (r) = 2r^N(1-r)+r^{2}+ 2r-1,
\end{equation*}
and $R_{N} $ cannot be improved.
\item[(ii)] Let $f=h+\overline{g}$ in $S_H^0$ be a stable univalent harmonic mapping on the unit disk $U$, then for each $ N \in \mathbb{N}$
we have
 $$|f(z)| +  \underset{n=N}{\overset{\infty}{{\displaystyle\sum}}}\left( |a_{n}|+|b_n|\right)r^n \leq d(f(0),\partial f(U))$$ if $|z|\leq r_{N},$
where $r_{n}$ is the unique root in $(0,1)$ of  the function

$$\psi_{N} (r)=r+r^{N}(1 -Nr+r)-\frac{1}{4}(1-r)^2.$$

Moreover,  $$|f(z)|^2 +  \underset{n=N}{\overset{\infty}{{\displaystyle\sum}}}\left( |a_{n}|+|b_n|\right)r^n \leq d(f(0),\partial f(U))$$ if $|z|\leq r_{N},$
where $r_{N}$ is the unique root in $(0,1)$ of  the function

$$\kappa_{N} (r)=\frac{r^2}{(1-r)^4}+\frac{r^{N}(1 -Nr+r)}{(1-r)^2} - 1/4.$$

\end{enumerate}

\end{corollary}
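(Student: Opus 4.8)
The plan is to derive the entire Corollary by specializing Theorem~\ref{rog1} to $m=1$; no new estimate is needed, so the task reduces to verifying that the four functions appearing in the Corollary are exactly the $m=1$ instances of $H_{m,N}$, $K_{m,N}$, $\psi_{m,N}$, and $\kappa_{m,N}$ (possibly after multiplication by a factor that is positive on $(0,1)$), and that the existence, uniqueness, and sharpness claims descend accordingly.

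First I would set $m=1$ in the convex case. In \eqref{functionconvex1} the quotient $(1-r^m)/(1-r)$ equals $1$ when $m=1$, so $\frac{2r^N(1-r^m)}{1-r}$ collapses to $2r^N$ and $3r^m-1$ becomes $3r-1$, giving $H_{1,N}(r)=2r^N+3r-1=H_N(r)$. Likewise, in \eqref{functionconvex2} the term $\frac{2r^N(1-r^m)^2}{1-r}$ becomes $2r^N(1-r)$ and $r^{2m}+2r^m-1$ becomes $r^2+2r-1$, yielding $K_{1,N}(r)=2r^N(1-r)+r^2+2r-1=K_N(r)$. This settles part (i).

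For part (ii) I would substitute $m=1$ into \eqref{functionstarlike1} and \eqref{fucntionstarlike2}. The function $\kappa_{m,N}$ specializes directly, since $\frac{r^{2m}}{(1-r^m)^4}=\frac{r^2}{(1-r)^4}$, so $\kappa_{1,N}=\kappa_N$ verbatim. For $\psi$, setting $m=1$ gives $\psi_{1,N}(r)=\frac{r}{(1-r)^2}+\frac{r^N(1-Nr+r)}{(1-r)^2}-\frac14$; multiplying through by $(1-r)^2$, which is strictly positive on $(0,1)$ and therefore moves no root there, produces precisely $\psi_N(r)=r+r^N(1-Nr+r)-\frac14(1-r)^2$. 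Hence $\psi_N$ and $\psi_{1,N}$ share the same zero set and the same sign pattern on $(0,1)$, so the radius determined by $\psi_N$ agrees with $r_{1,N}$.

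Finally, the existence and uniqueness of the relevant root in $(0,1)$, together with the non-improvability of the radii, carry over unchanged from Theorem~\ref{rog1}, because the extremal analytic maps used there, namely $l(z)=z/(1-z)$ in the convex case and the Koebe function $k(z)=z/(1-z)^2$ in the univalent case, do not depend on $m$ and so furnish the sharp examples at $m=1$ directly. I do not expect any genuine obstacle here, since the argument is a straightforward algebraic substitution; the only point demanding a moment's care is the bookkeeping in the $\psi$ case, where one must note explicitly that clearing the denominator $(1-r)^2$ preserves both the root and the sign structure on the open interval.
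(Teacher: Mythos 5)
Your proposal is correct and is exactly the paper's route: the paper introduces this result with ``As an immediate corollary letting $m=1$ we get,'' and your verification that $H_{1,N}=H_N$, $K_{1,N}=K_N$, $\kappa_{1,N}=\kappa_N$, and that $\psi_N=(1-r)^2\,\psi_{1,N}$ has the same roots and sign on $(0,1)$, together with inheriting sharpness from the extremal maps $z/(1-z)$ and $z/(1-z)^2$, is precisely what that remark leaves implicit. No gap.
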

\begin{remark}
We note that  in case (i) for $N=1$, we have $R_1 =1/5 $ for the stable convex harmonic case  and $r_1 = 2-\sqrt{3} $.

\end{remark}
\section{Bohr's phenomenon for stable logharmonic mappings}

In this section we get Bohr's radius in the case of stable logharmonic mappings. For $f(z)=zh\overline{g}$,  such that $f (0) = 0 $ and $h(0) = g(0) = 1,$ we let  
\begin{equation*}
h(z)=exp\left( \sum\limits_{n=1}^{\infty }a_{n}z^{n}\right) and\
g(z)=exp\left( \sum\limits_{n=1}^{\infty }b_{n}z^{n}\right) .
\end{equation*} 

\subsection{Bohr's inequality for stable logharmonic mappings }
We define the majorant series for  $f=zh\overline{g}$  as in \cite{AAC}  to be 
\begin{equation*}
B_{f}(r)=|z|exp\left( \sum\limits_{n=1}^{\infty
}|a_{n}+e^{it}b_{n}||z|^{n}\right) 
\end{equation*}%
In \cite{AAC}, the authors proved that \ for  $f(z)=zh\overline{g}\ $%
starlike logharmonic mappings $B_{f}(r)\leq d(0,\partial f(U))$ for $\
|z|\leq r_{0},$ where $r_{0}$ $\approx 0.09078.$ We show here that the
results improve under the stability assumption.

We will be needing the following Distortion Theorem and Coefficient Estimates proved in \cite{AE}.

\begin{thmd}(\cite{AE})\label{distortion2}
\qquad Let $f=zh\overline{g}$ be a stable univalent (or resp. stable starlike) logharmonic mapping on the unit disk U, 0$\notin hg(U).$%
Then for all  $z \in U,$ the following inequalities hold:%
\begin{equation*}
\dfrac{|z|}{(1+|z|)^{2}}\leq |f(z)|\leq \dfrac{|z|}{(1-|z|)^{2}}.
\end{equation*}
\end{thmd}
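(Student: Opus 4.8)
The plan is to reduce the two distortion bounds for the logharmonic mapping $f$ to the classical growth (Koebe) theorem for the analytic class $S$ of normalized univalent functions and its subclass $S^{*}$ of starlike functions, by producing an \emph{analytic} companion $F$ that satisfies $|F(z)| = |f(z)|$ throughout $U$. Since $|f(z)| = |z|\,|h(z)|\,|\overline{g(z)}| = |z|\,|h(z)|\,|g(z)|$, the natural candidate is $F(z) = z\,h(z)\,g(z)$: it is analytic (as $h,g$ are analytic and, by $0\notin(hg)(U)$, nonvanishing), it is normalized since $F(0)=0$ and $F'(0) = h(0)g(0) = 1$, and it has $|F(z)| = |f(z)|$ identically.

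The crucial step is to use the stability hypothesis to place $F$ in $S^{*}$ (resp. $S$). I would invoke the stable condition at the endpoint $\lambda = -1$: because $|-1| = 1 \le 1$, the mapping $f_{-1}(z) = z\,h(z)\,\overline{g(z)}^{-1} = z\,h(z)\,\overline{(1/g)(z)}$ is again a starlike (resp. univalent) logharmonic mapping, and it is still normalized since $(1/g)(0)=1$ and the product $h\cdot(1/g)$ is nonvanishing. Now apply the classical dictionary between starlike logharmonic mappings and starlike analytic functions: a starlike logharmonic mapping $z\,p\,\overline{q}$ with $p(0)=q(0)=1$ and $0\notin(pq)(U)$ corresponds to the starlike analytic function $zp/q$. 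Taking $p=h$ and $q=1/g$ yields the companion $z\,h/(1/g) = z\,h\,g = F$, so $F$ is starlike analytic.

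Finally, with $F\in S^{*}$ normalized, the classical growth theorem gives $\frac{|z|}{(1+|z|)^{2}} \le |F(z)| \le \frac{|z|}{(1-|z|)^{2}}$ for all $z\in U$, and since $|F(z)| = |f(z)|$ these are exactly the asserted inequalities. The same scheme handles the stable univalent case, replacing $S^{*}$ by $S$ and the starlike correspondence by the analogous structural correspondence sending a stable univalent logharmonic mapping to a univalent analytic companion $z\,h\,g$; the growth theorem holds verbatim for all of $S$, with the Koebe function $z/(1-z)^{2}$ as the extremal showing sharpness of the upper bound.

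I expect the main obstacle to be the rigorous justification of the companion correspondence for the \emph{univalent} (as opposed to starlike) class, since the clean logharmonic-to-analytic dictionary is classically phrased for starlike and convex families; establishing that stability forces $z\,h\,g$ into $S$ requires the structural results of \cite{AE} rather than a one-line citation. A secondary point to verify carefully is that $f_{-1}$ genuinely satisfies the hypotheses of the correspondence, namely the normalization $(1/g)(0)=1$ and the nonvanishing of the product $h\cdot(1/g)$, both of which follow from $0\notin(hg)(U)$.
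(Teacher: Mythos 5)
The paper does not actually prove Theorem D: it is quoted verbatim from \cite{AE}, so there is no in-paper proof to compare against. That said, your reconstruction is essentially the argument used in that reference, and it is sound. The key identity $|f(z)|=|z\,h(z)g(z)|$, the observation that the endpoint $\lambda=-1$ of the stability hypothesis makes $zh\overline{(1/g)}$ a starlike univalent logharmonic mapping, and the classical dictionary sending a starlike logharmonic mapping $zp\overline{q}$ to the starlike analytic function $zp/q$ together place the companion $F=zhg$ in $S^{*}$, after which the Koebe growth theorem gives exactly the stated bounds. Your own caveat is the correct one and is the only real gap: for the merely stable \emph{univalent} case the starlike dictionary is unavailable, and you must instead import the structural theorem of \cite{AE} asserting that stable univalence of $zh\overline{g}$ forces the analytic functions $zhg^{\mu}$, $|\mu|\le 1$ (in particular $\mu=1$), to be normalized univalent; with that cited, the growth theorem for the full class $S$ finishes the proof. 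One small point to make explicit: $f_{-1}$ is automatically sense-preserving, since its second dilatation is $-a$ where $a$ is that of $f$, so it genuinely satisfies the hypotheses of the starlike correspondence.
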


Moreover we have the following coefficient estimates :

\begin{thme}(\cite{AE})\label{coefficient2}
 Assume that  $f(z)=zh\overline{g}\ $ is stable univalent
logharmonic mapping. Then for all nonnegative integers $n$, we have 
\begin{equation*}
\left\vert |a_{n}|-|b_{n}|\right\vert \leq \max \{|a_{n}|,|b_{n}|\}\leq
|a_{n}|+|b_{n}|\leq n.
\end{equation*}
\end{thme}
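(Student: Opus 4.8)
The first two inequalities, $\bigl||a_n|-|b_n|\bigr| \le \max\{|a_n|,|b_n|\} \le |a_n|+|b_n|$, hold for any pair of nonnegative reals, so the entire content of the statement is the last estimate $|a_n|+|b_n|\le n$. The plan is to mirror the proof of Theorem~A in the harmonic case: use the defining stability of $f$ to produce a one-parameter family of univalent objects, extract a de~Branges-type bound on a linear combination $a_n+\lambda b_n$ of the exponent coefficients, and then rotate $\lambda$ to align the phases of $a_n$ and $\lambda b_n$.

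First, by the definition of SST$_{Lh}$ the mappings $f_{\lambda} = z\,h(z)\,\overline{g(z)}^{\lambda}$ are univalent logharmonic for every $|\lambda|\le 1$, so in particular for every $|\lambda|=1$. Writing $\log\bigl(f_{\lambda}/z\bigr) = \sum_{n\ge 1} a_n z^n + \lambda\,\overline{\sum_{n\ge 1} b_n z^n}$, one sees that passing from $f$ to $f_{\lambda}$ replaces the co-analytic log-coefficient $b_n$ by $\lambda b_n$ while leaving $a_n$ untouched; this is exactly the logharmonic analogue of replacing $g$ by $\lambda g$ in the harmonic setting of Theorem~A. I would then invoke the correspondence between univalent logharmonic mappings and analytic univalent functions that already underlies the Koebe-type distortion bounds of Theorem~D (those bounds signal that the companion analytic function lies in the class $S$). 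Associating to $f_{\lambda}$ the normalized analytic function built from its dilatation and arguing that the quantity governing its $n$-th coefficient is $a_n+\lambda b_n$ (up to conjugating $\lambda$), the de~Branges bound gives $|a_n+\lambda b_n|\le n$ for every unimodular $\lambda$.

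Finally, choosing $\lambda$ with $|\lambda|=1$ so that $\lambda b_n$ has the same argument as $a_n$ yields $|a_n+\lambda b_n| = |a_n|+|b_n|$, and hence $|a_n|+|b_n|\le n$; combined with the two elementary inequalities this proves the theorem. This last phase-alignment step is purely formal, so the whole difficulty is concentrated in the preceding correspondence.

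The main obstacle is precisely the step that transfers univalence of the logharmonic $f_{\lambda}$ into a bound on the exponent coefficients $a_n,b_n$ themselves. Because $h$ and $g$ enter through exponentials, the $n$-th Taylor coefficient of the associated analytic function is $a_n$ (or $a_n\pm\lambda b_n$) plus a polynomial in the lower-order coefficients, so de~Branges applied naively to Taylor coefficients does not isolate $a_n$. The real work, carried out in \cite{AE}, is to set up the correspondence so that the estimate lands \emph{linearly} on the log-coefficients before one rotates in $\lambda$. An alternative, and in some respects cleaner, route for the starlike subclass is to use the positive-real-part characterization $\mathrm{Re}\bigl(1 + z h'/h - \overline{z g'/g}\bigr) > 0$, which exhibits $n(a_n-\lambda b_n)$ as a coefficient of a Carath\'eodory function and feeds the bound $|n(a_n-\lambda b_n)|\le 2$ into the same phase-alignment argument; matching the normalization of this estimate against the stated bound $\le n$, especially its behavior at $n=1$, is the point that needs the most care.
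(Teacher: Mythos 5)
First, a point of comparison: the paper does not prove Theorem~E at all --- it is quoted verbatim from \cite{AE} --- so there is no internal proof to measure your argument against; it has to stand on its own. Judged that way, your reduction to the single inequality $|a_n|+|b_n|\le n$ is correct, and the overall strategy (pass to the stable family $f_{\lambda}$, bound $|a_n+\lambda b_n|$ uniformly over $|\lambda|=1$, then rotate $\lambda$ to align phases) is the right template, since it is exactly how the harmonic Theorem~A is obtained from the Hernandez--Martin characterization. But the step that carries all the weight is missing, and you say so yourself. The assertion that ``the de~Branges bound gives $|a_n+\lambda b_n|\le n$'' does not follow from anything you set up: de~Branges controls the \emph{Taylor} coefficients of a normalized univalent function, whereas here $a_n$ and $b_n$ are exponent (logarithmic) coefficients, $h=\exp\left(\sum a_nz^n\right)$ and $g=\exp\left(\sum b_nz^n\right)$. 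The coefficient of $z^{n+1}$ in the associated analytic function $z\exp\left(\sum_k(a_k+\lambda b_k)z^k\right)$ equals $a_n+\lambda b_n$ only up to a polynomial in the lower-order exponent coefficients (note also the index shift, so the relevant de~Branges bound is $n+1$, not $n$), and no pointwise de~Branges-type bound holds for logarithmic coefficients of general univalent functions. So the transfer from univalence of $f_{\lambda}$ to a \emph{linear} estimate on $a_n+\lambda b_n$ --- the entire content of the theorem --- is not established; deferring it to ``the real work carried out in \cite{AE}'' means the proposal is a plan, not a proof.

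The alternative Carath\'eodory route you sketch for the starlike subclass does yield a linear estimate, but with the normalization problem you already suspect. From $\mathrm{Re}\left(1+zh'/h-\overline{zg'/g}\right)>0$ one gets a harmonic function of positive real part whose coefficients are $na_n$ and $-nb_n$, whence $n\left(|a_n|+|b_n|\right)\le 2$ after the phase rotation in $\lambda$. For $n\ge 2$ this is in fact stronger than the stated $|a_n|+|b_n|\le n$, but at $n=1$ it gives $|a_1|+|b_1|\le 2$ rather than $\le 1$, so it does not recover the theorem as stated; and it only applies to the starlike subclass in the first place. Until either the de~Branges transfer is made precise or the $n=1$ case is settled by a separate argument, there is a genuine gap at the central step.
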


We will next state and prove a version of Bohr's inequality for stable logharmonic mappings.
\begin{theorem}\label{logbohr}
Let $f=zh\overline{g}$ be a stable univalent (or resp. stable starlike)
logharmonic mapping on the unit disk U, 0$\notin hg(U)$. Then 

\begin{enumerate}
\item[(i)]  $B_{f}(r)<1$ if \ $|z|\leq r_{0}=0.378,$ where $r_{0}$ is the unique root in $%
(0,1)$ of

\begin{equation*}
r\exp \left( \frac{r}{\left( 1-r\right) ^{2}}\right) = 1.
\end{equation*}

\item[(ii)] $B_{f}(r)\leq d(0,\partial f(U))$ if \ $|z|\leq r_{0}=0.286,$
where $r_{0}$ is the unique root in $(0,1)$ of \ 
\begin{equation*}
r\exp \left( \frac{r}{\left( 1-r\right) ^{2}}\right) =\frac{1}{4}.
\end{equation*}
\end{enumerate}
\end{theorem}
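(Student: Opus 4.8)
The plan is to reduce the inequality for the majorant series $B_f(r)$ to a single transcendental inequality in $r$, using the coefficient estimate of Theorem E, and then to identify the claimed radius as the unique root of the associated equation. First I would apply the triangle inequality to bound, for every real $t$ and every $n\geq 1$,
\begin{equation*}
|a_n+e^{it}b_n|\leq |a_n|+|b_n|,
\end{equation*}
and then invoke Theorem E, which gives $|a_n|+|b_n|\leq n$ for all $n$. Summing and using $\sum_{n=1}^{\infty}nr^{n}=r/(1-r)^2$ then produces the uniform bound
\begin{equation*}
B_f(r)\leq r\exp\left(\frac{r}{(1-r)^2}\right)=:\varphi(r),
\end{equation*}
which holds for every $t$ since the right-hand side is independent of $t$.

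For part (i) it suffices to show $\varphi(r)<1$ on the relevant disk. I would note that $\varphi$ is continuous and strictly increasing on $(0,1)$, because both the factor $r$ and the exponent $r/(1-r)^2$ increase there, and that $\varphi(0^{+})=0$ while $\varphi(r)\to\infty$ as $r\to 1^{-}$. Consequently $\varphi(r)=1$ has a unique root $r_0$ in $(0,1)$, and $B_f(r)\leq\varphi(r)<1$ for all $|z|=r\leq r_0$.

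For part (ii) I would first determine the boundary distance. Since $f(0)=0$, the lower distortion estimate of Theorem D yields
\begin{equation*}
d(0,\partial f(U))=\liminf_{|z|\to 1}|f(z)|\geq \liminf_{|z|\to 1}\frac{|z|}{(1+|z|)^2}=\frac{1}{4},
\end{equation*}
exactly the computation carried out in \eqref{dist1} for the harmonic case. It is then enough to guarantee $\varphi(r)<1/4$; by the same monotonicity argument the equation $\varphi(r)=1/4$ has a unique root $r_0$ in $(0,1)$, and for $|z|=r\leq r_0$ we obtain $B_f(r)\leq\varphi(r)<1/4\leq d(0,\partial f(U))$.

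I do not anticipate a serious obstacle: the whole argument hinges on the single coefficient inequality $|a_n|+|b_n|\leq n$ from Theorem E, after which the problem collapses to the elementary monotonicity of $\varphi$. The only point deserving care is that the estimate must hold uniformly in the parameter $t$ entering $B_f(r)$, which is automatic because the triangle inequality removes the $e^{it}$ factor before any optimization over $t$ is required. As the statement asserts no sharpness in the logharmonic case, no extremal mapping needs to be exhibited.
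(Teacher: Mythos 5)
Your proposal is correct and follows essentially the same route as the paper: bound $|a_n+e^{it}b_n|\leq |a_n|+|b_n|\leq n$ via Theorem E, sum to get $B_f(r)\leq r\exp\bigl(r/(1-r)^2\bigr)$, and use the lower distortion bound of Theorem D to get $d(0,\partial f(U))\geq 1/4$. The only differences are cosmetic: you spell out the monotonicity argument for uniqueness of the root (which the paper merely asserts), and you correctly attribute the distance estimate to Theorem D where the paper's text misprints it as Theorem C.
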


\begin{proof}
For $|z|=r$, we have 
\begin{equation*}
B_{f}(r)\leq r\ exp\left( \sum\limits_{n=1}^{\infty }\left(|a_{n}|+|b_{n}|\right) r^{n}\right) \leq r\ exp\left(\sum\limits_{n=1}^{\infty }nr^{n}\right) =r\exp \left( \frac{r}{\left(1-r\right) ^{2}}\right).
\end{equation*}

It follows that $B_{f}(r)<1~$ for $|z|\ \leq r_{0}=0.378,$ where $r_{0}$ is
the unique root in $(0,1)$ of \ 

\begin{equation*}
r\exp \left( \frac{r}{\left( 1-r\right) ^{2}}\right) =1.
\end{equation*}

We next use Theorem C to establish bounds for $d(0,\partial f(U))$ in
the case where  $f $ is stable univalent or stable starlike.
\begin{equation}\label{distance2}
d(0, \partial f(U))=\underset{|z|\rightarrow 1}{\lim }\inf \ |f(z)-f(0)|=%
\underset{|z|\rightarrow 1}{\lim }\inf \ \dfrac{|f(z)-f(0)|}{|z|}\geq \underset{|z|\rightarrow 1}{\lim }\inf \dfrac{|z|}{(1+|z|)^{2}}= \frac{1}{4}.
\end{equation}%
Moreover, $B_{f}(r)<\frac{1}{4}\leq d(0,\partial f(U))$ if \ $|z|\leq
r_{0}=0.286,$ where $r_{0}$ is the unique root in $(0,1)$ of \ 
\begin{equation*}
r\exp \left( \frac{r}{\left( 1-r\right) ^{2}}\right) =\frac{1}{4}.
\end{equation*}
\end{proof}

\subsection{Improved Bohr inequality for logharmonic mappings}

We next show an improved version of Bohr inequality for stable logharmonic mappings that follows the ideas proposed in \cite{EPR,KP4}.

\begin{theorem}
Let $f=zh\overline{g}$ be a stable univalent (or resp. stable starlike)
logharmonic mapping on the unit disk U, 0$\notin hg(U)$. Then 
 $$|z|exp\left( \underset{n=1}{\overset{\infty}{{\displaystyle\sum}}}( |a_{n}+e^{it}b_n|+|a_n||b_n| )|z|^n\right) \leq d(0,\partial f(U)),$$ if \ $|z|\leq r_{0}=0.152,$
where $r_{0}$ is the unique root in $(0,1)$ of \ 
\begin{equation*}
r\exp \left( \frac{r}{\left( 1-r\right) ^{2}}-\frac{r(r+1)}{(r-1)^3}\right) =\frac{1}{4}.
\end{equation*}

\end{theorem}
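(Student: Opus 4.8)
The plan is to follow the template of the proof of Theorem \ref{logbohr}, the only new feature being the extra term $|a_n||b_n|$ inside the exponential. Writing $|z|=r$ and using $\exp(x+y)=\exp(x)\exp(y)$, I would split the exponent into two majorant series,
\[
|z|\exp\!\left(\sum_{n=1}^{\infty}\bigl(|a_n+e^{it}b_n|+|a_n||b_n|\bigr)r^n\right)
= r\exp\!\left(\sum_{n=1}^{\infty}|a_n+e^{it}b_n|\,r^n\right)\exp\!\left(\sum_{n=1}^{\infty}|a_n||b_n|\,r^n\right),
\]
and estimate the two sums separately. The first is handled exactly as in Theorem \ref{logbohr}: since $|a_n+e^{it}b_n|\le |a_n|+|b_n|\le n$ by Theorem E, it is dominated by $\sum_{n\ge 1} n r^n=\frac{r}{(1-r)^2}$.

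For the second sum the key point is that the estimate $|a_n|+|b_n|\le n$ of Theorem E forces $|a_n|\le n$ and $|b_n|\le n$ individually, whence $|a_n||b_n|\le n^2$. Summing the dominating series via the standard identity $\sum_{n\ge 1} n^2 r^n=\frac{r(1+r)}{(1-r)^3}$ then gives
\[
\sum_{n=1}^{\infty}|a_n||b_n|\,r^n\le \frac{r(1+r)}{(1-r)^3}.
\]
Combining the two bounds, the left-hand side is at most $r\exp\!\bigl(\frac{r}{(1-r)^2}+\frac{r(1+r)}{(1-r)^3}\bigr)$; since $(r-1)^3=-(1-r)^3$, this is precisely $r\exp\!\bigl(\frac{r}{(1-r)^2}-\frac{r(r+1)}{(r-1)^3}\bigr)$, the function appearing in the stated root equation.

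It remains to compare with the boundary distance. As already established in \eqref{distance2}, the lower distortion estimate of Theorem D gives $d(0,\partial f(U))\ge \frac14$ for a stable univalent (or stable starlike) logharmonic $f$. Hence it suffices to show that the majorant does not exceed $\frac14$ on $|z|\le r_0$. Setting $\Phi(r)=r\exp\!\bigl(\frac{r}{(1-r)^2}+\frac{r(1+r)}{(1-r)^3}\bigr)$, one checks that $\Phi$ is continuous and strictly increasing on $(0,1)$ with $\Phi(0^+)=0$ and $\Phi(r)\to\infty$ as $r\to 1^-$, so the equation $\Phi(r)=\frac14$ has a unique root $r_0$, numerically $r_0\approx 0.152$; the claimed inequality then follows for all $|z|\le r_0$.

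The argument is largely routine once it is anchored to the proof of Theorem \ref{logbohr}, so I do not anticipate a genuine obstacle. The single point demanding care is the estimate of the cross term: the stated radius corresponds exactly to the crude bound $|a_n||b_n|\le n^2$ together with $\sum_{n\ge 1} n^2 r^n=\frac{r(1+r)}{(1-r)^3}$, so one must use this estimate (rather than the sharper bound $|a_n||b_n|\le n^2/4$ from the AM--GM inequality) in order to reproduce the function in the root equation and the value $r_0\approx 0.152$.
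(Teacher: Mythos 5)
Your argument is essentially identical to the paper's proof: the same coefficient bounds $|a_n+e^{it}b_n|\le|a_n|+|b_n|\le n$ and $|a_n||b_n|\le n^2$ from Theorem E, the same series identities giving $\frac{r}{(1-r)^2}$ and $\frac{r(1+r)}{(1-r)^3}$, and the same comparison with $d(0,\partial f(U))\ge\frac14$ via the distortion bound. Your closing observation that the stated radius corresponds to the crude bound $|a_n||b_n|\le n^2$ rather than the sharper AM--GM bound $n^2/4$ is correct and is exactly the choice the paper makes.
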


\begin{proof}

A simple computation along Theorem D give: 
\begin{align*}
&|z|exp\left( \underset{n=1}{\overset{\infty}{{\displaystyle\sum}}}( |a_{n}+e^{it}b_n|+|a_n||b_n| )|z|^n\right)\\
&\leq r\ exp\left( \sum\limits_{n=1}^{\infty }\left(|a_{n}|+|b_{n}|+|a_n||b_n|\right) r^{n}\right) \\
&\leq r\ exp\left(\sum\limits_{n=1}^{\infty }nr^{n}+\sum\limits_{n=1}^{\infty }n^2r^{n}\right) \\
&=r\exp \left( \frac{r}{\left(1-r\right) ^{2}}-\frac{r(r+1)}{(r-1)^3}\right),\\
\end{align*}
which implies that $|z|exp\left( \underset{n=1}{\overset{\infty}{{\displaystyle\sum}}}( |a_{n}+e^{it}b_n|+|a_n||b_n| )|z|^n\right) \leq d(0,\partial f(U))$ if \ $|z|\leq r_{0}=0.152,$
where $r_{0}$ is the unique root in $(0,1)$ of \ 
\begin{equation*}
r\exp \left( \frac{r}{\left( 1-r\right) ^{2}}-\frac{r(r+1)}{(r-1)^3}\right) =\frac{1}{4}. 
\end{equation*}

Sharpness follows from suitable rotation of the koebe function $f(z) =\frac{z}{(1-z)^2}.$ (??)

\end{proof}
\subsection{Refined Bohr inequality for stable logharmonic mappings}

We next state a refined version of Bohr's inequality for stable logharmonic mappings. The proof is omitted as the idea is similar to the previous theorems in this section.
\begin{theorem}
Let $f=zh\overline{g}$ be a stable univalent (or resp. stable starlike)
logharmonic mapping on the unit disk U, 0$\notin hg(U)$. Then 

 $$|z|exp\left( \underset{n=1}{\overset{\infty}{{\displaystyle\sum}}}( |a_{n}+e^{it}b_n||z|^n+ |a_{n}+e^{it}b_n|^2|z|^{2n})\right) \leq d(0,\partial f(U))$$ if \ $|z|\leq r_{0}=0.271,$
where $r_{0}$ is the unique root in $(0,1)$ of \ 
\begin{equation*}
r\exp \left( \frac{r}{\left( 1-r\right) ^{2}}-\frac{r^4+r^2}{(r^2-1)^3}\right) =\frac{1}{4}.
\end{equation*}

\end{theorem}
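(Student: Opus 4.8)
The plan is to follow the same scheme as the improved inequality just proved: bound the exponent term by term using the triangle inequality and Theorem E, evaluate the resulting series in closed form, and then invoke the distortion bound of Theorem D to pin down the distance. Writing $r=|z|$ and estimating uniformly in $t$ by $|a_n+e^{it}b_n|\le |a_n|+|b_n|$, I would first reduce the left-hand side to
\[
r\exp\!\left(\sum_{n=1}^{\infty}(|a_n|+|b_n|)\,r^{n}+\sum_{n=1}^{\infty}(|a_n|+|b_n|)^{2}\,r^{2n}\right).
\]

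Next I would apply the coefficient estimate of Theorem E, namely $|a_n|+|b_n|\le n$, to both sums. The first becomes the familiar $\sum_{n=1}^\infty n r^n=\frac{r}{(1-r)^2}$, while for the second the bound $(|a_n|+|b_n|)^2\le n^2$ gives $\sum_{n=1}^\infty n^2 r^{2n}=\frac{r^2(1+r^2)}{(1-r^2)^3}$. Using $(r^2-1)^3=-(1-r^2)^3$, this last quantity equals $-\frac{r^4+r^2}{(r^2-1)^3}$, so the left-hand side is dominated by
\[
r\exp\!\left(\frac{r}{(1-r)^2}-\frac{r^4+r^2}{(r^2-1)^3}\right),
\]
which is exactly the expression in the displayed equation defining $r_0$.

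For the distance, I would reuse the computation in \eqref{distance2}: by Theorem D one has $d(0,\partial f(U))\ge \liminf_{|z|\to 1}\frac{|z|}{(1+|z|)^2}=\frac14$. Hence it suffices to require that the dominating quantity above be at most $\frac14$, i.e. $r\exp\!\left(\frac{r}{(1-r)^2}-\frac{r^4+r^2}{(r^2-1)^3}\right)\le\frac14$, and the threshold is the first positive root of the associated equation.

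To finish I would record the elementary calculus facts guaranteeing a well-defined radius: the function on the left of the defining equation is continuous on $(0,1)$, vanishes at $r=0$, is strictly increasing, and tends to $+\infty$ as $r\to1^-$, so the equation has a unique root in $(0,1)$, which a numerical evaluation locates at the stated $r_0$. I expect the only step requiring genuine care to be the closed-form evaluation of $\sum_{n=1}^\infty n^2 r^{2n}$ and the sign bookkeeping needed to match the $(r^2-1)^3$ in the denominator; everything else is immediate from Theorems D and E and parallels the preceding proofs.
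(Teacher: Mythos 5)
Your proposal is correct and follows exactly the scheme the paper intends: the paper omits the proof of this theorem, stating only that it runs along the lines of the preceding ones, and your argument (triangle inequality plus the coefficient bound $|a_n|+|b_n|\le n$ of Theorem E, the closed forms $\sum n r^n=\frac{r}{(1-r)^2}$ and $\sum n^2 r^{2n}=\frac{r^2(1+r^2)}{(1-r^2)^3}=-\frac{r^4+r^2}{(r^2-1)^3}$, and the lower bound $d(0,\partial f(U))\ge \frac14$ from Theorem D) is precisely that scheme, with the monotonicity remark correctly justifying uniqueness of the root. One caveat: you should not claim that a numerical evaluation "locates" the root at the stated $r_0=0.271$, since the displayed equation with right-hand side $\frac14$ actually has its unique root near $r\approx 0.18$ (the value $0.271$ solves the same equation with right-hand side $\frac12$); this numerical inconsistency is inherited from the paper, but your proof should either report the correct root or flag the discrepancy rather than endorse $0.271$.
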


\subsection{Bohr Rogosinski radius for stable logharmonic mappings}

\begin{theorem}
Let $f=zh\overline{g}$  be a stable univalent logharmonic mapping on the unit disk $U$, then for each $m, N \in \mathbb{N}$
we have
 $$|f(z^m)| + |z|exp\left( \underset{n=N}{\overset{\infty}{{\displaystyle\sum}}}( |a_{n}+e^{it}b_n|)r^n\right) \leq d(f(0),\partial f(U))$$ if $|z|\leq r_{m,N},$
where $r_{m,n}$ is the unique root in $(0,1)$ of  the function

\begin{equation}\label{functionlog}
\psi_{m,N} (r)=\frac{r^m}{(1-r^m)^2}+r exp \left(\frac{r^{N}(N -Nr+r)}{(1-r)^2}\right)-\frac{1}{4},\\
\end{equation}
and $r_{m,N}$ cannot be improved. Here $\underset{N\rightarrow\infty}{\lim}r_{m,N}=a_m ,$ where $a_m$ solves the equation $$ \frac{r^m}{(1-r^m)^2} + r=1/4$$ \,\, and   $\underset{m\rightarrow\infty}{\lim}r_{m,N}=r_N ,$  where $r_N$ is the solution of the equation $$rexp \left(\frac{r^{N}(N -Nr+r)}{(1-r)^2}\right)=1/4, $$ and $\underset{m,N\rightarrow\infty}{\lim}r_{m,N}= 1/4$.

Moreover. $$|f(z^m)|^2 +  |z|exp\left( \underset{n=N}{\overset{\infty}{{\displaystyle\sum}}}( |a_{n}+e^{it}b_n|)r^n\right)  \leq d(f(0),\partial f(U))$$ if $|z|\leq R_{m,N},$
where $R_{m,n}$ is the unique root in $(0,1)$ of  the function

\begin{equation}
\kappa_{m,N} (r)=\frac{r^{2m}}{(1-r^m)^4}+r exp \left(\frac{r^{N}(N -Nr+r)}{(1-r)^2}\right)-\frac{1}{4},
\end{equation}
and $R_{m,N}$ cannot be improved. Here $\underset{N\rightarrow\infty}{\lim}R_{m,N}=a_m ,$ where $a_m$ solves the equation $$ \frac{r^{2m}}{(1-r^m)^4} + r=1/4$$ \,\, and   $\underset{m\rightarrow\infty}{\lim}R_{m,N}=r_N ,$  where $r_N$ is the solution of the equation $$rexp \left(\frac{r^{N}(N -Nr+r)}{(1-r)^2}\right)=1/4, $$ and $\underset{m,N\rightarrow\infty}{\lim}r_{m,N}= 1/4$.

\end{theorem}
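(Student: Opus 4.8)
The plan is to decouple the two summands exactly as in the proof of Theorem \ref{rog1}: bound the point-evaluation term $|f(z^m)|$ by the distortion estimate of Theorem D, bound the exponential tail by the coefficient estimate of Theorem E, and then merge the two bounds into a single scalar inequality whose threshold is the advertised root $r_{m,N}$. Since the right-hand side $d(0,\partial f(U))\ge \tfrac14$ is already supplied by \eqref{distance2}, it suffices to force the sum of the two upper bounds below $\tfrac14$.

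For the first term, writing $|z|=r$ so that $|z^m|=r^m$, Theorem D gives $|f(z^m)|\le r^m/(1-r^m)^2$, and squaring yields $|f(z^m)|^2\le r^{2m}/(1-r^m)^4$ for the second assertion. For the exponential tail I would invoke $|a_n+e^{it}b_n|\le |a_n|+|b_n|\le n$ from Theorem E, so that $\sum_{n=N}^{\infty}|a_n+e^{it}b_n|r^n\le \sum_{n=N}^{\infty} n r^n$. The tail of this series is obtained by differentiating $\sum_{n=N}^{\infty} r^n = r^N/(1-r)$ and multiplying by $r$, which gives $\sum_{n=N}^{\infty} n r^n = r^N(N-Nr+r)/(1-r)^2$. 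Consequently
\begin{equation*}
|z|\exp\left(\sum_{n=N}^{\infty}|a_n+e^{it}b_n|r^n\right)\le r\exp\left(\frac{r^N(N-Nr+r)}{(1-r)^2}\right),
\end{equation*}
and adding the two estimates produces exactly $\psi_{m,N}(r)+\tfrac14$ (respectively $\kappa_{m,N}(r)+\tfrac14$). Hence the claimed inequality holds precisely when $\psi_{m,N}(r)\le 0$ (respectively $\kappa_{m,N}(r)\le 0$).

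To locate the threshold I would verify $\psi_{m,N}(0)=-\tfrac14<0$ and $\psi_{m,N}(r)\to+\infty$ as $r\to 1^-$ (driven by $r^m/(1-r^m)^2$), so a root exists by continuity; strict monotonicity then gives uniqueness, since $r^m/(1-r^m)^2$ is increasing and the exponent $r^N(N-Nr+r)/(1-r)^2$ is the tail of a power series with positive coefficients, hence increasing, making $r\exp(\cdot)$ increasing as well. Below $r_{m,N}$ one then has $\psi_{m,N}\le 0$. The stated limits are read off by sending a parameter to infinity inside $\psi_{m,N}$: for fixed $m$, the exponent tends to $0$ as $N\to\infty$, collapsing the equation to $r^m/(1-r^m)^2+r=\tfrac14$; for fixed $N$, the first term tends to $0$ as $m\to\infty$, leaving $r\exp\!\big(r^N(N-Nr+r)/(1-r)^2\big)=\tfrac14$; and both limits together yield $r=\tfrac14$. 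Each limit is justified by uniform convergence of $\psi_{m,N}$ on compact subsets of $(0,1)$ together with continuity of the (simple) root. The $|f(z^m)|^2$ case is identical with $\kappa_{m,N}$ replacing $\psi_{m,N}$.

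The step I expect to be the main obstacle is sharpness, i.e.\ that $r_{m,N}$ cannot be improved. Following the strategy of Theorem \ref{rog1}, I would test a suitable rotation of the logharmonic Koebe function, for which the distortion bound of Theorem D and the coefficient bound $|a_n|+|b_n|=n$ of Theorem E hold with equality. The delicate point is to confirm that a \emph{single} stable univalent logharmonic mapping saturates both inequalities at once, with the phase $e^{it}$ chosen so that $|a_n+e^{it}b_n|=|a_n|+|b_n|$ for all $n\ge N$; only then are the two separately derived upper bounds attained simultaneously and the threshold $r_{m,N}$ genuinely optimal.
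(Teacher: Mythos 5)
Your proposal follows essentially the same route as the paper: bound $|f(z^m)|$ (resp.\ its square) by the distortion estimate of Theorem D, bound the exponential tail via $|a_n+e^{it}b_n|\le|a_n|+|b_n|\le n$ and the identity $\sum_{n\ge N} n r^n = r^N(N-Nr+r)/(1-r)^2$, and compare the resulting majorant with the lower bound $d(0,\partial f(U))\ge \tfrac14$ from \eqref{distance2}. Your added remarks on monotonicity, the limiting equations, and the unresolved delicacy of simultaneous sharpness go beyond the paper's own (very terse) argument, which does not address sharpness at all, so your flagged obstacle is a fair criticism of the original rather than a gap in your approach.
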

\begin{proof}
We use Theorem C and D to get that
\begin{align*}
&|f(z^m)| + |z|exp\left( \underset{n=N}{\overset{\infty}{{\displaystyle\sum}}}( |a_{n}+e^{it}b_n|)r^n\right)\\
& \leq  \frac{r^m}{(1-r^m)^2}+ r exp \left(\frac{r^{N}(N -Nr+r)}{(1-r)^2}\right)\\
& \leq d(f(0),\partial f(U))
\end{align*}
 if $|z|\leq r_{m,N},$
where $r_{m,n}$ is the unique root in $(0,1)$ of  the function defined in equation \eqref{functionlog}.

The second inequality follows in a similar fashion.
\end{proof}

\begin{remark}
We note that for N=1, we have for large $m$, $r_1 \approx 0.286 $ which is consistent with Theorem \ref{Logbohr}. 

\end{remark}
Taking $m=1$ we get the following corollary

\begin{corollary}
Let $f=zh\overline{g}$  be a stable univalent logharmonic mapping on the unit disk $U$, then for each $ N \in \aleph$
we have
 $$|f(z)| + |z|exp\left( \underset{n=N}{\overset{\infty}{{\displaystyle\sum}}}( |a_{n}+e^{it}b_n|)r^n\right) \leq d(f(0),\partial f(U))$$ if $|z|\leq r_{N},$
where $r_{N}$ is the unique root in $(0,1)$ of  the function

\begin{equation}\label{functionstarlike1}
\psi_{N} (r)=\frac{r}{(1-r)^2}+r exp \left(\frac{r^{N}(N -Nr+r)}{(1-r)^2}\right)-\frac{1}{4},\\
\end{equation}
and $r_{N}$ cannot be improved.

Moreover, $$|f(z)|^2 +  |z|exp\left( \underset{n=N}{\overset{\infty}{{\displaystyle\sum}}}( |a_{n}+e^{it}b_n|)r^n\right)  \leq d(f(0),\partial f(U))$$ if $|z|\leq R_{N},$
where $R_{N}$ is the unique root in $(0,1)$ of  the function

\begin{equation}\label{functionstarlike1}
\kappa_{N} (r)=\frac{r^2}{(1-r)^4}+r exp \left(\frac{r^{N}(N -Nr+r)}{(1-r)^2}\right)-\frac{1}{4},\\
\end{equation}
and $R_{N}$ cannot be improved.
\end{corollary}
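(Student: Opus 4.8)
The plan is to obtain this corollary as the $m=1$ specialization of the preceding Bohr--Rogosinski theorem for stable univalent logharmonic mappings, while recording the short self-contained argument that makes the specialization transparent. First I would substitute $m=1$ into the auxiliary functions $\psi_{m,N}$ and $\kappa_{m,N}$: the distortion term $\frac{r^m}{(1-r^m)^2}$ collapses to $\frac{r}{(1-r)^2}$ and $\frac{r^{2m}}{(1-r^m)^4}$ collapses to $\frac{r^2}{(1-r)^4}$, whereas the exponential factor $r\exp\!\big(\frac{r^N(N-Nr+r)}{(1-r)^2}\big)$ carries no dependence on $m$ and is left unchanged. This already produces exactly the functions $\psi_N$ and $\kappa_N$ in the statement, so both inequalities and the sharpness assertion descend immediately from the parent theorem.

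For completeness I would spell out the estimate directly. By Theorem D (with $z$ in place of $z^m$) one has $|f(z)|\le \frac{r}{(1-r)^2}$, and hence $|f(z)|^2\le \frac{r^2}{(1-r)^4}$. For the tail I would invoke the coefficient bound of Theorem E, namely $|a_n+e^{it}b_n|\le |a_n|+|b_n|\le n$, together with the closed form $\sum_{n=N}^{\infty} n r^n = \frac{r^N(N-Nr+r)}{(1-r)^2}$, which is obtained by differentiating $\sum_{n\ge N} r^n = \frac{r^N}{1-r}$. Monotonicity of the exponential then gives
$$|z|\exp\Big(\sum_{n=N}^{\infty}|a_n+e^{it}b_n|\,r^n\Big)\le r\exp\Big(\frac{r^N(N-Nr+r)}{(1-r)^2}\Big),$$
and adding the distortion bound reproduces the right-hand sides of $\psi_N$ and $\kappa_N$ up to the constant $\frac14$. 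The distance comparison is exactly \eqref{distance2}: Theorem D yields $d(f(0),\partial f(U))\ge \frac14$, so it suffices that the combined majorant not exceed $\frac14$, i.e. that $\psi_N(r)\le 0$ (resp. $\kappa_N(r)\le 0$). I would then verify that $\psi_N$ is strictly increasing on $(0,1)$, since each term $\frac{r}{(1-r)^2}$ and $r\exp(\cdots)$ is a product of nonnegative increasing functions of $r$; combined with $\psi_N(0)=-\frac14<0$ and $\psi_N(r)\to+\infty$ as $r\to 1^-$, this forces a unique root $r_N$ and makes $\psi_N(r)\le 0$ equivalent to $r\le r_N$. The same reasoning applies verbatim to $\kappa_N$, and sharpness follows by testing the analytic Koebe function $k(z)=\frac{z}{(1-z)^2}$, for which the chain of inequalities becomes equality in the limit.

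The main obstacle is bookkeeping rather than mathematics: because the result is a direct specialization, the only steps that genuinely require care are confirming the closed form of the tail sum $\sum_{n\ge N} n r^n$ and checking that the strict monotonicity, hence the uniqueness of the root, survives the substitution $m=1$. Both are routine, so I do not anticipate any serious difficulty.
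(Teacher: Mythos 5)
Your proposal is correct and follows the paper's own route: the paper derives this corollary simply by setting $m=1$ in the preceding Bohr--Rogosinski theorem for stable univalent logharmonic mappings, exactly as you do, and your supplementary verification (the distortion bound from Theorem D, the coefficient bound from Theorem E, the tail sum $\sum_{n\ge N} n r^n = \frac{r^N(N-Nr+r)}{(1-r)^2}$, and the distance estimate $d(0,\partial f(U))\ge \tfrac14$) reproduces the argument used for the parent theorem. No gaps.
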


\end{document}